\definecolor{wiasblue}   {cmyk}{1.0, 0.60, 0, 0}
\def\N{\mathbb N}
\def\Z{\mathbb Z}
\def\E{\mathbb E}
\def\P{\mathbb P}
\def\R{\mathbb R}
\def\mc{\mathcal}
\def\ms{\mathsf}
\def\la{\lambda}
\def\d{\mathrm d}
\def\one{\mathbbm{1}}
\def\de{\delta}
\def\e{\varepsilon}
\def\t{\tau}
\def\a{\beta}
\def\g{\gamma}
\def\FF{\mc F }
\def\ccl{\mc N^{\ms L}}
\def\ccr{\mc N^{\ms R}}
\def\cca{\mc N^\cap}
\def\ccu{\mc N^\cup}
\def\nca{n_\cap}
\def\ncu{n_\cup}
\def\te{\t^*}
\def\Gl{G^{\ms L}}
\def\Gln{G^{\ms L, N}}
\def\Xl{X^{\ms L}}
\def\Xr{X^{\ms R}}
\def\Xln{X^{\ms L, N}}
\def\Xrn{X^{\ms R, N}}
\def\Dl{D_\infty^{\ms L}}
\def\Dr{D_\infty^{\ms R}}
\def\Drp{D_\infty^{\ms R, +}}
\def\Xli{X^{\ms {L,i}}}
\def\Xri{X^{\ms {R,i}}}
\def\Fl{F^{\ms L}}
\def\Fln{F^{\ms L, N}}
\def\Fr{F^{\ms R}}
\def\Frn{F^{\ms R, N}}
\def\Fli{F^{\ms {L,i}}}
\def\Fri{F^{\ms {R,i}}}
\def\Flm{F^{\ms {L,m}}}
\def\Frm{F^{\ms {R,m}}}
\def\Glm{G^{\ms {L,m}}}
\def\Grm{G^{\ms {R,m}}}
\def\nl{n^{\ms L}}
\def\nr{n^{\ms R}}
\def\nca{n^\cap}
\def\ncu{n^\cup}
\def\lm{\log_\mu}
\def\Esu{E^{\ms{succ}}}
\def\Efa{E^{\ms{fail}}}
\def\FFco{\FF^{\ms{coup}}}
\def\xff{\big((\Xl_h, \Fl_h), (\Xr_h, \Fr_h)\big)}
\def\xfii{\big((\Xli_h, \Fli_h), (\Xri_h, \Fri_h)\big)}
\def\Pi{\P^{\ms i}}
\def\Ne{N^*}
\def\Ke{K^*}
\def\Ek{E}
\def\ll{\log_{1 / \de}\lm}
\def\r{\rho}
\def\i{\infty}
\def\f{\frac}
\def\bi{\big}
\def\Bi{\Big}
\def\li{\liminf}
\def\ls{\limsup}
\def\ld{\dots}
\def\cd{\cdots}
\def\lc{\lceil}
\def\rc{\rceil}
\def\vp{\varphi}
\def\ssec{\subsection}
\def\ff{\infty}
\def\s{\sigma}
\def\bel{\begin{lemma}}
\def\enl{\end{lemma}}
\def\bep{\begin{proof}}
\def\enp{\end{proof}}
\def\been{\begin{enumerate}}
\def\enen{\end{enumerate}}
\def\im{\item}
	\def\mr{M_{\ms R}}
	\def\mrs{M_{\ms R, \ms i}}
\newtheorem{theorem}{Theorem}
\newtheorem{lemma}[theorem]{Lemma}
\begin{document}

\title{Extremal linkage networks}

\author{Markus Heydenreich}
\address{Mathematisches Institut, Universit\"at M\"unchen, Theresienstr.~39, 80333~M\"unchen, Germany}
\email{m.heydenreich@lmu.de}

\author{Christian Hirsch}
\address{Bernoulli Institute, University of Groningen, Nijenborgh 9, 9747 AG Groningen, The Netherlands}
\email{c.p.hirsch@rug.nl}

\keywords{spatial network, random tree, coalescence, reinforcement, neural network, small-world graph}
\subjclass[2010]{60G70, 05C80, 60K35}

\date{\today}

\maketitle

\begin{abstract}
We demonstrate how sophisticated graph properties, such as small distances and scale-free degree distributions, arise naturally from a reinforcement mechanism on layered graphs. 
Every node is assigned an a-priori i.i.d.\ fitness with max-stable distribution. 
The fitness determines the node attractiveness w.r.t.\ incoming edges as well as the spatial range for outgoing edges. 
For max-stable fitness distributions, we thus obtain complex spatial network, which we coin \emph{extremal linkage network}. 
\end{abstract}


%
%
\section{Motivation}
Many real-world networks share a number of stylized facts, including
\begin{itemize}
\item \emph{scale-free}: the degree sequence resembles a power-law distribution; 
\item \emph{small world}: the graph distances between different network nodes are typically {short}, say $o(N^\delta)$ for every $\delta > 0$ (where $N$ is the total number of nodes in the network); 
\item \emph{geometric clustering}: nodes that are close to each other (in a certain geometric sense) have higher chance of being connected; 
\item \emph{hierarchies}: nodes with high degree have high probability of being connected even if they are far from each other.
\end{itemize}
A variety of mathematical network models have been proposed addressing all or at least some of the features above, and these appear in contexts from a wide range of domains. It is nevertheless not clear, \emph{why} real-world networks share these ubiquitous features. 

The \emph{preferential attachment} model, introduced by Barab\'asi and Albert \cite{BarabasiAlbert99}, establishes reinforcement mechanisms as an attempt to explain the universality of these features from an algorithmic point of view. Indeed, preferential attachment graphs exhibit the scale-free and small-world properties \cite{BollobasRiordan04}. In spatial versions, there is even geometric clustering and hierarchies \cite{aiello, JacobMoerters13}.

Although the preferential attachment mechanism presents a compelling explanation for network formation, the ramifications of selecting nodes proportionally to their fitness can sometimes be prohibitively challenging to handle from a mathematical point of view. This raises the question, whether it is possible to recover many of the desirable features through a much simpler game. For instance, can we replace the proportional selection by simply picking the node with the maximum fitness in a spatial neighborhood? Although at first sight, this may seem like an entirely different story, it actually approximates the strong reinforcement regime, where the selection occurs according to power-weighted fitnesses with a large exponent. Naturally, this maximality-based selection scheme opens the door towards connections with \emph{extreme value theory}, and we explore this path in detail in the present work.

	To put this abstract blueprint on a concrete footing, we focus on an \emph{activity-based reinforcement model} inspired from synaptic plasticity in neuroscience. We first outline the model motivation and definition and then showcase how extreme value theory enters the stage in the form of the max-stability of the Fr\'echet distribution.
To this end, we are considering a caricature model for a neural network: 
There is a set of neurons, each of them equipped with one axon and a number of dendrites connected to axons of other neurons. Pairs of axons and dendrites may form synapses, i.e., functional connections between neurons. 
However, not all geometric connections necessarily also form functional connections. The resulting network can be interpreted as a directed graph with neurons as nodes and synapses as edges (directed from dendrite to axon). 

A number of experiments revealed that the resulting neural 
network is rather sparse and very well connected, that is, any pair of neurons is connected through a short chain of neural connections reminiscent of the ``small-world property''. 
These features allow for very fast and efficient signal processing. 
The challenge is to explain the mechanism behind the 
formation of such sophisticated neural networks. 
Kalisman, Silberberg, and Markram \cite{kalisman-silberberg-markram-2005} 
use experimental evidence to advocate a \emph{tabula rasa approach}: In an early stage, there is a (theoretical) 
all-to-all geometrical connectivity. Stimulation and transmission 
of signals enhance certain touches to ultimately form functional 
connections, which results in a network with rather few actual synapses. 
This describes brain plasticity at an early development stage.

It is clear that the actual formation of the brain involves much more complex processes that are beyond the scope of a rigorous treatment. 
Yet, we aim at clarifying which network characteristics can be explained by a simple reinforcement scheme, and which cannot. 
The mathematical question that we put forward is: Can a simple reinforcement mechanism give rise to complex network properties such as the scale-free and small-world properties? 

A naive modelling using so-called $(W, A)$-reinforcement models (or `WARM') suggests a negative answer. In this model involving P\'olya urns with graph-based competition, there are only two regimes: In the strong reinforcement regime, the process is supported on small isolated islands \cite{warm3, warm1, warm2}, whereas in the weak reinforcement regime the support is on the entire graph \cite{warm4, warm5}. There is thus no regime in which a subgraph with suitable properties emerges. 

The situation changes dramatically when looking at a slightly different setup. 
In an earlier work \cite{HeydeHirsc19}, we investigated a WARM-type model on a \emph{layered network}. On this layered network, we proved rigorously that sufficiently strong reinforcement is responsible for logarithmic distances, and thus the small-world property applies for the resulting random graph. 
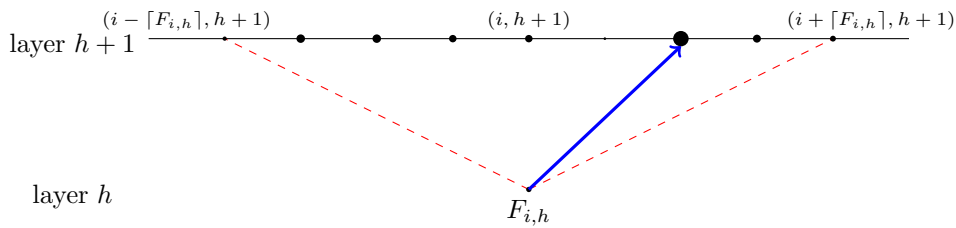
\begin{figure}[b]
	\centering
	\begin{tikzpicture}

	\draw[white] (0, 0.5) -- (10, 0.5);
	\draw (0, 0) -- (10, 0);

	\draw[red, dashed] (5, -2) -- (1, 0);
	\draw[red, dashed] (5, -2) -- (9, 0);
	\fill (5, -2) circle(1pt);

	\fill (1, 0) circle (.8pt);
	\fill (2, 0) circle (1.6pt);
	\fill (3, 0) circle (1.6pt);
	\fill (4, 0) circle (1.4pt);
	\fill (5, 0) circle (1.4pt);
	\fill (6, 0) circle (.5pt);
	\fill (7, 0) circle (2.9pt);
	\fill (8, 0) circle (1.5pt);
	\fill (9, 0) circle (1.1pt);

	\draw[blue, ->, very thick] (5, -2) -- (7, -.1);

	       \coordinate[label=-90: {\tiny $(i - \lceil F_{i, h} \rceil, h + 1)$}] (D) at (0.5,.5);
	       \coordinate[label=-90: {\tiny $(i + \lceil F_{i, h} \rceil, h + 1)$}] (D) at (9.5,.5);
	       \coordinate[label=-90: {\tiny $(i, h + 1)$}] (D) at (5,.5);
	       \coordinate[label=-90: {\small $ F_{i, h} $}] (D) at (5,-2);
	       \coordinate[label=-90: {\small layer {$h + 1$}}] (D) at (-1, 0.2);
	       \coordinate[label=-90: {\small layer {$h$}}] (D) at (-1, -1.8);
\end{tikzpicture}
	\caption{The vertex at $(i, h)$ is connected to the vertex with highest fitness in a window of length $2\lceil F_{i, h}\rceil + 1$ at level $h+1$.}
    \label{FigConnection}
\end{figure}

However, one may criticize that many of the final findings in \cite{HeydeHirsc19} were already hard-wired exogenously into the model from the beginning. In particular, each layer is assigned a (fixed) \emph{scope}, which grows exponentially in the number of layers, and this scope determines how far a vertex can connect. 
This \emph{scope} misses physical plausibility, and in the present manuscript we rectify this shortcoming by modelling the reinforcement scheme endogenously. Indeed, we assign each vertex an a-priori \emph{fitness}, which has two functions: first it measures the node attractiveness in comparison with the fitnesses of neighboring nodes; secondly, it encodes how far a connection from this vertex may reach. 
Instead of activity-based reinforcement, like in \cite{HeydeHirsc19}, we simplify the interaction by drawing one directed edge from vertex $(i, h)\in\mathbb Z\times \N_0$ with fitness $F_{i, h} > 0$ (where $i$ is the spatial location and $h$ is the layer) to the vertex with highest fitness in the set 
\[ \big\{(i-\lceil F_{i, h}\rceil, h+1), \;\dots\;, (i+\lceil F_{i, h}\rceil, h+1)\big\}, \]
see Fig.\ \ref{FigConnection}. 

This resembles a strong reinforcement regime in case the fitnesses have a \emph{max-stable} distribution, which we henceforth assume. 
We prove the scale-free and small-world property for such networks under suitable parameters. 
Since the max-stable fitness distribution is instrumental both for modelling the reinforcement and also for obtaining the desired behavior, we coin this network model \emph{extremal linkage networks}. 

%
%
\section{Model and results}
\label{modelSec}
We define a random network on an infinite set of layers, each consisting of $N \ge 1$ nodes. The node $i \in \{0, \dots, N - 1\}$ in layer $h \in \Z$ has a fitness $F_{i, h}$, where we assume the family $\{F_{i, h}\}_{i \in \{0, \dots, N - 1\}, h \in \Z}$ to be independent and identically distributed (i.i.d.).
We say that $(j, h+1)$ is \emph{visible} from $(i, h)$ if $d_N(i, j)\le \lceil F_{i, h}\rceil$, 
where $d_N$ is the distance on the discrete torus $\Z/N\Z$. 
Then, the number of nodes on layer $h+1$ that are visible for the $i$th node in layer $h$, which we call the \emph{scope} of $(i, h)$, is given by $\vp(F_{i, h}) \wedge N$, where
\begin{equation}\label{eqPhiDef}
	\vp(f) = 1 + 2 \lc f \rc.
\end{equation}
Now, $(i, h)$ connects to precisely one visible node $(j, h+1)$ in layer $h+1$, namely the one of maximum fitness. In other words, 
\[F_{j, h+1} = \max_{j':\, d_N(i, j') \le \lc F_{i, h}\rc}F_{j', h+1}.\]
We illustrate this \emph{extremal linkage network} in Figure \ref{ext_link_fig}.

\begin{figure}[!htpb]
	\centering
		    \begin{tikzpicture}
\fill (15.30, 0.00) circle (1.1pt);
\fill (15.45, 0.00) circle (0.5pt);
\fill (15.60, 0.00) circle (-0.4pt);
\fill (15.75, 0.00) circle (0.7pt);
\fill (15.90, 0.00) circle (0.4pt);
\fill (16.05, 0.00) circle (-0.1pt);
\fill (16.20, 0.00) circle (-0.5pt);
\fill (16.35, 0.00) circle (0.2pt);
\fill (16.50, 0.00) circle (-0.2pt);
\fill (16.65, 0.00) circle (0.3pt);
\fill (16.80, 0.00) circle (0.3pt);
\fill (16.95, 0.00) circle (0.3pt);
\fill (17.10, 0.00) circle (0.5pt);
\fill (17.25, 0.00) circle (0.5pt);
\fill (17.40, 0.00) circle (-0.2pt);
\fill (17.55, 0.00) circle (2.0pt);
\fill (17.70, 0.00) circle (-0.1pt);
\fill (17.85, 0.00) circle (0.4pt);
\fill (18.00, 0.00) circle (-0.3pt);
\fill (18.15, 0.00) circle (-0.1pt);
\fill (18.30, 0.00) circle (0.4pt);
\fill (18.45, 0.00) circle (-0.3pt);
\fill (18.60, 0.00) circle (1.0pt);
\fill (18.75, 0.00) circle (-0.5pt);
\fill (18.90, 0.00) circle (0.4pt);
\fill (19.05, 0.00) circle (0.5pt);
\fill (19.20, 0.00) circle (0.7pt);
\fill (19.35, 0.00) circle (0.4pt);
\fill (19.50, 0.00) circle (0.2pt);
\draw[line width = .5pt] (15.30, 0.00)--(16.50, 1.00);
\draw[line width = .5pt] (15.45, 0.00)--(15.60, 1.00);
\draw[line width = .5pt] (15.60, 0.00)--(15.60, 1.00);
\draw[line width = .5pt] (15.75, 0.00)--(16.50, 1.00);
\draw[line width = .5pt] (15.90, 0.00)--(15.60, 1.00);
\draw[line width = .5pt] (16.05, 0.00)--(16.20, 1.00);
\draw[line width = .5pt] (16.20, 0.00)--(16.20, 1.00);
\draw[line width = .5pt] (16.35, 0.00)--(16.50, 1.00);
\draw[line width = .5pt] (16.50, 0.00)--(16.50, 1.00);
\draw[line width = .5pt] (16.65, 0.00)--(16.50, 1.00);
\draw[line width = .5pt] (16.80, 0.00)--(16.50, 1.00);
\draw[line width = .5pt] (16.95, 0.00)--(16.95, 1.00);
\draw[line width = .5pt] (17.10, 0.00)--(16.95, 1.00);
\draw[line width = .5pt] (17.25, 0.00)--(16.95, 1.00);
\draw[line width = .5pt] (17.40, 0.00)--(17.55, 1.00);
\draw[line width = .5pt] (17.55, 0.00)--(17.85, 1.00);
\draw[line width = .5pt] (17.70, 0.00)--(17.85, 1.00);
\draw[line width = .5pt] (17.85, 0.00)--(17.85, 1.00);
\draw[line width = .5pt] (18.00, 0.00)--(17.85, 1.00);
\draw[line width = .5pt] (18.15, 0.00)--(18.30, 1.00);
\draw[line width = .5pt] (18.30, 0.00)--(17.85, 1.00);
\draw[line width = .5pt] (18.45, 0.00)--(18.45, 1.00);
\draw[line width = .5pt] (18.60, 0.00)--(17.85, 1.00);
\draw[line width = .5pt] (18.75, 0.00)--(18.90, 1.00);
\draw[line width = .5pt] (18.90, 0.00)--(18.45, 1.00);
\draw[line width = .5pt] (19.05, 0.00)--(18.90, 1.00);
\draw[line width = .5pt] (19.20, 0.00)--(18.90, 1.00);
\draw[line width = .5pt] (19.35, 0.00)--(18.90, 1.00);
\draw[line width = .5pt] (19.50, 0.00)--(19.35, 1.00);
\fill (15.60, 1.00) circle (1.2pt);
\fill (16.20, 1.00) circle (0.6pt);
\fill (16.50, 1.00) circle (1.2pt);
\fill (16.95, 1.00) circle (0.7pt);
\fill (17.55, 1.00) circle (0.0pt);
\fill (17.85, 1.00) circle (1.5pt);
\fill (18.30, 1.00) circle (0.4pt);
\fill (18.45, 1.00) circle (1.0pt);
\fill (18.90, 1.00) circle (0.6pt);
\fill (19.35, 1.00) circle (0.3pt);
\draw[line width = .5pt] (15.60, 1.00)--(16.65, 2.00);
\draw[line width = .5pt] (16.20, 1.00)--(16.65, 2.00);
\draw[line width = .5pt] (16.50, 1.00)--(17.40, 2.00);
\draw[line width = .5pt] (16.95, 1.00)--(17.40, 2.00);
\draw[line width = .5pt] (17.55, 1.00)--(17.40, 2.00);
\draw[line width = .5pt] (17.85, 1.00)--(20.85, 2.00);
\draw[line width = .5pt] (18.30, 1.00)--(18.60, 2.00);
\draw[line width = .5pt] (18.45, 1.00)--(18.60, 2.00);
\draw[line width = .5pt] (18.90, 1.00)--(18.60, 2.00);
\draw[line width = .5pt] (19.35, 1.00)--(19.65, 2.00);
\fill (16.65, 2.00) circle (1.1pt);
\fill (17.40, 2.00) circle (1.3pt);
\fill (18.60, 2.00) circle (1.6pt);
\fill (19.65, 2.00) circle (1.3pt);
\fill (20.85, 2.00) circle (1.8pt);
\draw[line width = .5pt] (16.65, 2.00)--(17.70, 3.00);
\draw[line width = .5pt] (17.40, 2.00)--(19.35, 3.00);
\draw[line width = .5pt] (18.60, 2.00)--(21.15, 3.00);
\draw[line width = .5pt] (19.65, 2.00)--(21.15, 3.00);
\draw[line width = .5pt] (20.85, 2.00)--(24.75, 3.00);
\fill (17.70, 3.00) circle (1.4pt);
\fill (19.35, 3.00) circle (2.0pt);
\fill (21.15, 3.00) circle (2.5pt);
\fill (24.75, 3.00) circle (2.6pt);
\draw[line width = .5pt] (17.70, 3.00)--(19.05, 4.00);
\draw[line width = .5pt] (19.35, 3.00)--(19.05, 4.00);
\draw[line width = .5pt] (21.15, 3.00)--(19.05, 4.00);
\draw[line width = .5pt] (24.75, 3.00)--(19.05, 4.00);
\fill (19.05, 4.00) circle (4.3pt);
\end{tikzpicture}
			    \caption{Extremal linkage network with fitnesses of tail index 1. Node sizes proportional to logarithmic fitnesses. For clarity, only paths starting from a selection of base nodes in layer 0 are given.}
				    \label{ext_link_fig}
\end{figure}
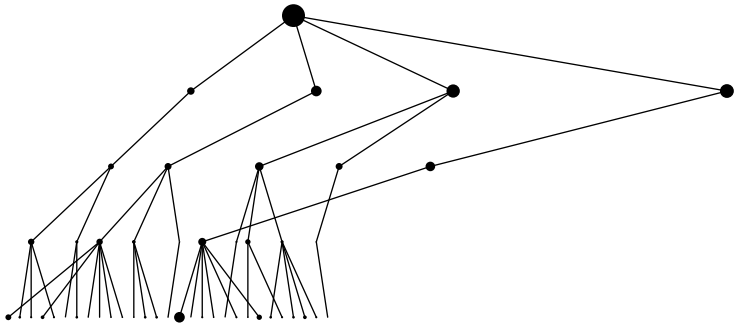

To ease notation, we write $(i, h) \to (j, h+1)$ and think of the directed edges as \emph{arrows}. 
In this work, we identify the asymptotic degree and distance distribution as $N \to \i$.

%
%
\subsection{Degree distribution}
First, we study the typical indegree 
$$D_N := \#\{i \in \{0, \dots, N - 1\}:\, (i, -1) \to (0, 0) \}.$$ 
We assume that the fitnesses are i.i.d.\ copies of a random variable $F$ with tails 
\begin{equation}\label{eqFgeneral}
	\P(F > s) \asymp s^{-\de}.
\end{equation}
for some $\de > 0$. 
{Here and throughout we write $f(s)\asymp g(s)$ whenever there exist constants $c_1, c_2 > 0$ such that $c_1\le f(s)/g(s) \le c_2$ uniform in $s$}. As $N \to \i$, the typical indegree converges in distribution to a non-degenerate random variable $D_\i$ with the following tail behavior.

\begin{theorem}[Degree distribution]
	\label{degThm}
	The typical indegree $D_N$ converges in distribution to a non-degenerate random variable $D_\i$. Moreover, 
	$$
-\log \P(D_\i> s) \asymp
	\begin{cases} 
		s	&\text{ if $\de = 1$}, \\
		\log(s)	&\text{ if $\de < 1$}, \\
		s\log(s)	&\text{ if $\de > 1$}.
	\end{cases}
	$$
\end{theorem}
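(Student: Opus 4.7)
\emph{Convergence and conditional structure.} My plan is to exploit the conditional independence that arises when one fixes the entire layer $0$, and then to combine Chernoff-type concentration with explicit event constructions to get matching two-sided tail bounds in each regime. First, I will replace $D_N$ by its infinite-volume analogue $D_\i := \sum_{i \in \Z} \one\{(i, -1) \to (0, 0)\}$ via truncation: for $|i| \le M$ the torus and $\Z$ couple once $N$ is large, and the unconditional probability
$\P((i, -1) \to (0, 0)) = \E[\vp(F_{i,-1})^{-1}\one\{\lc F_{i,-1}\rc \ge |i|\}] \lesssim |i|^{-\de - 1}$
(by exchangeability of the fitnesses inside the window of $(i,-1)$) is summable over $|i| > M$ uniformly in $N$. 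Next I condition on all fitnesses in layer~$0$, write $f := F_{0, 0}$, and let $L^+ := \min\{j > 0 : F_{j, 0} \ge f\}$, $L^- := \max\{j < 0 : F_{j, 0} \ge f\}$ be the closest competitors. Since $X_i := \one\{(i, -1) \to (0, 0)\}$ depends only on $F_{i, -1}$ once the layer is fixed, the $X_i$'s are conditionally independent Bernoullis, and for $i > 0$ one has
\[
p_i := \P(X_i = 1 \mid \text{layer }0) \asymp \P\bi(i \le F < \min(L^+ - i,\; i - L^-)\bi),
\]
which vanishes as soon as $i \ge L^+ / 2$; an analogous formula holds for $i < 0$.

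\emph{Scaling of the conditional mean.} Conditional on $F_{0, 0} = f$, the distances $L^+$ and $|L^-|$ are essentially independent integer-valued variables with geometric-like tail $(1 - c f^{-\de})^r$, so typically $L^\pm \asymp f^{\de}$. On this typical event $p_i \asymp i^{-\de}$ for $1 \le i \ll L^+$, and I expect
\[
\mu(f, L^\pm) := \sum_i p_i \asymp \sum_{i = 1}^{f^{\de}} i^{-\de} \asymp
\begin{cases}
1 & \text{if } \de > 1,\\
\log f & \text{if } \de = 1,\\
f^{\de(1 - \de)} & \text{if } \de < 1,
\end{cases}
\]
which already explains the three qualitatively different tails. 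Since $D_\i$ is conditionally a sum of independent Bernoullis with mean $\mu$ and variance $\le \mu$, Chernoff yields $\P(D_\i > s \mid \text{layer } 0) \le (e \mu / s)^s$ when $s \gg \mu$, and Paley--Zygmund yields $\P(D_\i \ge \mu / 2 \mid \text{layer } 0) \gtrsim 1$ when $\mu$ is large.

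\emph{Assembling the tail bounds.} For the upper bound on $\P(D_\i > s)$, I will threshold $F_{0, 0}$ at some $f^* = f^*(s)$, writing $\P(D_\i > s) \le \P(F_{0, 0} > f^*) + \P(D_\i > s,\, F_{0, 0} \le f^*)$ and picking $f^*$ so that $\mu(f^*) \le s/2$. This gives $f^* \asymp s^{1/(\de(1 - \de))}$ when $\de < 1$ and $f^* = e^{c s}$ when $\de = 1$; the head term $\P(F_{0, 0} > f^*) \asymp (f^*)^{-\de}$ then matches $s^{-1/(1 - \de)}$ and $e^{-c s}$ respectively, while the tail term is Chernoff-small. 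For $\de > 1$ no truncation is needed since $\mu = O(1)$ uniformly in $f$, and Chernoff directly gives $(e \mu / s)^s = e^{-(1 + o(1)) s \log s}$. For the matching lower bound when $\de > 1$, I will use the explicit event $\{F_{0, 0} \in [1, 2]\} \cap \bigcap_{i = 1}^s \{F_{i, -1} \in [i, C s]\} \cap \{F_{j, 0} < 1 : 0 < |j| \le (C + 1) s\}$, which forces $X_1 = \dots = X_s = 1$ and has probability $\asymp (s!)^{-\de} e^{-c s}$; when $\de \le 1$, I will instead take $F_{0, 0}$ just above $f^*$ so that $\mu \ge 2s$ and apply Paley--Zygmund. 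The hardest step will be the sharp two-sided control of $\mu(f, L^\pm)$ together with the second-moment estimate for $\sum p_i^2$ needed for Paley--Zygmund, both of which require a careful joint analysis of $L^+$ and $L^-$ including contributions from atypical configurations.
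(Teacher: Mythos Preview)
Your plan is correct and follows essentially the same route as the paper: reduce to the infinite-volume limit, condition on the layer-$0$ configuration so that the indicators $X_i$ become independent Bernoullis, control the conditional mean, and then combine Poisson/Chernoff concentration with the tail of the conditioning variable. The one noteworthy difference is in the parametrization. You condition first on $f = F_{0,0}$ and then argue that the nearest competitors satisfy $L^\pm \asymp f^{\delta}$; the paper instead works directly with $R = L^+$ and exploits the exchangeability identity $\P(R \ge m) = 1/m$, which makes the splitting $\P(D_\infty^{\mathsf R,+} > s) \le \P(\lambda(R) > s/2) + \text{(concentration term)}$ completely explicit without ever tracking $f$. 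This shortcut lets the paper dispense with the ``typical event'' $L^\pm \asymp f^\delta$ and the associated joint analysis you flag as the hardest step; conversely, your parametrization by $f$ is perhaps more transparent about \emph{why} the three regimes arise, and your treatment of the lower bounds (explicit event for $\delta>1$, Paley--Zygmund for $\delta\le 1$) is more detailed than the paper, which simply asserts that the lower-bound arguments are analogous.
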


\subsection{Distances}
Since from each node, we draw precisely one arrow to the next layer, the shortest distance between two nodes at layer 0 equals the coalescence time of the corresponding trajectories. We let $H_N$ denote the first layer where the trajectories emanating from two random nodes from the initial layer coalesce.

Henceforth, we assume the fitnesses to follow a Fr\'echet distribution with tail index $\de > 0$. That is, 
\begin{equation}\label{eqFspecific}
	\P(F \le s) = \exp(-s^{-\de}).
\end{equation}
In particular, $G = \log(2F^\de)$ follows a Gumbel distribution with mean $\log(\mu) := \E[G] > 0$.

%
%
\begin{theorem}[Distances]
	\label{distThm}
	If the fitness distribution is given by \eqref{eqFspecific}, then the typical distance $H_N$ is almost surely finite for every $N \ge 1$. Moreover, 
	\begin{enumerate}
\item if $\de = 1$, then asymptotically almost surely, 
			$$ \frac{H_N}{\lm(N)} \xrightarrow{N \to \i} 1;$$
		\item if $\de < 1$, then 
			$\bi\{H_N - \log_{1/\delta}\log(N)\bi\}_{N \ge 1} \text{ is tight in $\R$};$
		\item if $\de \in (1, 2)$, then 
		    $\{H_N / N^\de\}_{N \ge 1}$ is tight in $(0, \i);$
		\item if $\de > 2$, then 
			$\{H_N / N^2\}_{N \ge 1}$ is tight in $(0, \i).$
		\end{enumerate}
\end{theorem}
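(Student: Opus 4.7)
The plan is to reduce both the coalescence statement and the scaling exponents to the growth behaviour of the fitness process $\{F_h\}_{h \ge 0}$ along a single trajectory. By the Fr\'echet max-stability, conditional on $F_h$ one has
\begin{equation*}
F_{h + 1}^\delta \stackrel{d}{=} (\varphi(F_h) \wedge N)\cdot Z_{h+1},
\end{equation*}
where $Z_{h+1}$ is standard Fr\'echet. Equivalently, setting $G_h = \log(2 F_h^\delta)$, one obtains a recursion of the form
\begin{equation*}
G_{h+1} = \tfrac{1}{\delta} G_h + W_{h+1} + O(1),
\end{equation*}
with $W_{h+1}$ a Gumbel random variable. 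The macroscopic behaviour is thus dictated by the coefficient $1/\delta$: linear growth in $h$ for $\delta = 1$, double-exponential growth for $\delta < 1$, and contraction to a stationary distribution for $\delta > 1$. I would split the proof along these three regimes.

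For $\delta = 1$, the process $G_h$ is a random walk with positive drift $\log \mu$. The strong law of large numbers gives $G_h/h \to \log \mu$ almost surely, which translates into $F_h \approx \mu^h$ and hence into the scope $\varphi(F_h) \wedge N$ reaching order $N$ at layer $h \approx \lm(N)$. Once the scopes of \emph{two} trajectories are simultaneously of order $N$, their windows cover most of the torus and a direct computation shows coalescence within $O(1)$ additional layers with high probability. The matching lower bound ruling out early coalescence uses that, before the scope reaches $N$, typical trajectories use disjoint windows and therefore evolve essentially independently. For $\delta < 1$, the same philosophy applies but with doubly exponential scope growth: iterating the recursion shows that $G_h$ is sandwiched between deterministic sequences of order $(1/\delta)^h$, so the scope first hits $N$ after $\log_{1/\delta}\log(N) + O(1)$ layers, with the $O(1)$ term tight because the Gumbel fluctuations accumulate only geometrically.

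For $\delta > 1$ the coefficient $1/\delta < 1$ makes the recursion a contraction driven by Gumbel noise; standard iterated random function arguments give exponential convergence of $F_h$ to a stationary distribution whose tail is again $\asymp s^{-\delta}$. Crucially, conditional on $F_h$ the position increment $\Delta_h = X_{h+1} - X_h$ is uniform on $\{-\lceil F_h\rceil, \ldots, \lceil F_h\rceil\}$, since the argmax of i.i.d.\ fitnesses is uniformly distributed in the observation window. Hence $\Delta_h$ has tail index $\delta$. As long as the two trajectories have disjoint windows they use disjoint fitnesses and perform independent heavy-tailed walks on $\Z/N\Z$, so their difference $Y_h$ is itself a heavy-tailed random walk. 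Invoking standard scaling limits then gives the announced orders: $Y_{\lfloor N^\delta t\rfloor}/N$ converges to a symmetric $\delta$-stable L\'evy process on the torus for $\delta \in (1,2)$, while $Y_{\lfloor N^2 t\rfloor}/N$ converges to Brownian motion on the torus for $\delta > 2$; the hitting time of a fixed neighbourhood of $0$ scales as $N^\delta$ and $N^2$ respectively. Once $|Y_h|$ is bounded, the two windows overlap in a positive fraction of their union, so the per-step coalescence probability is uniformly bounded below, yielding coalescence in $O(1)$ further layers.

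The hardest part is the $\delta > 1$ regime. Although the trajectories use disjoint fitnesses when their windows are separated, the correlations introduced by the shared fitness field when windows overlap must be analysed carefully to conclude that the rescaled difference $Y_h$ retains the stable or Brownian scaling limit, and to uniformly lower-bound the coalescence probability at overlap. A secondary delicate point is upgrading tightness to the almost-sure rate $H_N/\lm(N) \to 1$ in the $\delta = 1$ case, which requires sharp concentration for the random walk $G_h$ together with a matching argument that no early coalescence occurs before the scope reaches $N$.
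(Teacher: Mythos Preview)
Your overall architecture matches the paper's: split into the three regimes governed by the coefficient $1/\delta$ in the log-fitness recursion, control the scope growth along a single trajectory for the lower bounds, and argue coalescence once the scopes are large (or once the scaling limit of the walk difference hits zero) for the upper bounds. The $\delta<1$ treatment is essentially the paper's, and your heavy-tailed walk heuristic for $\delta>1$ is the right picture.

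Two points where your emphasis diverges from the paper are worth flagging. First, you locate the main difficulty in the $\delta>1$ regime, worrying about correlations through the shared fitness field. The paper sidesteps this entirely: it proves the functional CLT (respectively the stable limit) for a \emph{single} walker $\{X^{\mathsf L}_h\}$ via a martingale functional limit theorem, using that $\{F^{\mathsf L}_h\}$ is a Harris-recurrent Markov chain satisfying a Foster--Lyapunov drift condition; the increments are not i.i.d.\ but are martingale differences with conditional variance $w(F^{\mathsf L}_h)$, and a Markov-chain LLN handles the averaging. The two walkers are then simply treated as independent until first overlap, and overlap is precisely the stopping time of interest. So the correlation issue you highlight never needs to be confronted.

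Second, and more importantly, you understate the $\delta=1$ \emph{upper} bound. Your sentence ``once the scopes are simultaneously of order $N$ \ldots a direct computation shows coalescence within $O(1)$ additional layers'' hides the real work. Unlike the $\delta<1$ case, for $\delta=1$ the state $\varphi(F^{\mathsf L}_h)=N$ is \emph{not} absorbing: at each step the scope drops below $N$ with fixed positive probability, so one cannot simply wait one step. The paper devotes three lemmas to this: (i) showing the fitness stays above $N\exp(-(\log_\mu N)^{3/4})$ throughout the relevant window; (ii) constructing an explicit coupling of the true pair of walkers with an \emph{independent} pair, designed so that whenever the coupling fails the true walkers coalesce with probability at least $1/4$; and (iii) showing the independent walkers reach $\varphi=N$ simultaneously within $(\log_\mu N)^{7/8}$ layers. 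This coupling is the genuinely new idea for $\delta=1$ and is not a direct computation; your sketch would need something of this kind to close.
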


{
Our results show that scale-free and small-world behavior is present if $\delta < 1$. A softer version of small-world behavior (with logarithmic distances) is present in the border case $\de = 1$. Geometric clustering is incorporated through the network mechanism. 

The proof for $\de > 2$ is based on the central limit theorem, while the case $\de\in(1, 2)$ is based on a corresponding stable limit theorem. It is plausible that an analog result is true for the case $\delta = 2$, however, logarithmic corrections might appear. We did not pursue this further, because from a modeling perspective our results are most interesting for $\de\le1$. 
}

\subsection{Organization}
We prove the asymptotic degree distribution of Theorem \ref{degThm} in Section \ref{degSec}. For the distance result in Theorem \ref{distThm}, we give separate proofs for the lower and upper bound in Sections \ref{lowBound} and \ref{upProofSec}, respectively. 

\section{Degrees}
\label{degSec}

As a first step, we describe the limiting distribution $D_\i$. This description rests on the observation that, as $N \to \i$, the $N$-torus $\Z/N\Z$  converges locally to $\Z$. More precisely, consider an i.i.d.\ family $\{F_{i, h}\}_{(i, h) \in \Z \times \Z}$ of Fr\'echet random variables with distribution as in \eqref{eqFgeneral}. 
Then, we let
$$L := \sup\{i \le 0:\, F_{i, 0} > F_{0, 0}\} \quad \text{ and }\quad R := \inf\{i \ge 0:\, F_{i, 0} > F_{0, 0}\}$$
denote the locations of the first points to the left and to the right of $(0, 0)$ with a higher fitness and put
$$\Dl =  \#\{i\in [L, 0):\, -i - 1 < F_{i, -1} \le -L + i -1 \} \text{ and } \Dr =  \#\{i\in (0, R]:\, i - 1 < F_{i, -1} \le R - i - 1\}$$
for the number of connections to $F_{0, 0}$ coming from the left and the right. Then, a coupling argument shows that $D_N$ converges in distribution to
\begin{align}
	\label{dInfEq}
	D_\i := 1 + \Dl + \Dr.
\end{align}
Equipped with this knowledge, we now establish the tail behavior of $D_\i$.

\begin{proof}[Proof of Theorem \ref{degThm}]
	We only establish the upper bound because the arguments for the lower bound are similar. Moreover, by symmetry, $\P(D_\i > s) \le 2 \P(\Dr > (s - 1)/2)$.

	First, we note that $\Dr \le \Drp$, where 
	$$\Drp := \sum_{i \in (0, R]} Y_i$$
	is a sum of independent Poisson random variables with $\P(Y_i = 0) = \P(i - 1 < F_{0, 0} \le R - i - 1)$. 
	Then, conditioned on $R$, also $\Drp$ is a Poisson random variable and has parameter 
	$$\la(R) := \sum_{i \in (0, R]} \P(Y_i = 0) = \sum_{i \in (0, R]}\P(i - 1 < F_{0, 0} \le R - i - 1).$$
	If $\de > 1$, then $\lim_{r \to \i} \la(r) < \i$, thereby leading to the asserted Poisson tails.

	For $\de \le 1$, we leverage the bound
	\begin{align*}
		\P(\Drp > s) &\le \P(\la(R) > s / 2) + \P\big(\la(R) \le s / 2, \Drp > s \big),
	\end{align*}
	where by the Poisson concentration inequality \cite[Lemma 1.2]{penrose}, the second summand decays exponentially fast in $s$. Moreover, the event $\{R \ge m\}$ encodes that the fitness at the origin is largest among the fitnesses of the first $m$ nodes. That is,
	$$\{R \ge m \} = \{F_{0, 0} = \max_{i\le m - 1} F_{i, 0}\}.$$
	Then, since fitnesses are identically distributed, 
	$$\P(F_{0, 0} = \max_{i\le m - 1} F_{i, 0}) = \P(F_{1, 0} = \max_{i\le m - 1} F_{i, 0}) =  \cdots = \P(F_{m - 1, 0} = \max_{i\le m - 1} F_{i, 0}) = \frac 1m,$$
	so that $\P(R \ge m) = 1 / m$.
	We conclude the proof by noting that $\log(\la^{-1}(r)) \asymp r$ for $\de = 1$ and $\log(\la^{-1}(r)) \asymp \log(r)$ for $\de < 1$.
\end{proof}

\section{Distances -- Lower bounds}
\label{lowBound}

To prove the lower bounds in Theorem \ref{distThm}, we relate the graph distance $H_N$ to the coalescence of two walkers. By symmetry, we may assume one of the randomly chosen nodes in the initial layer to be at position 0. Next, let $\Xln_h \in \{0, \dots, N - 1\}$ denote the position after $h \ge 0$ steps of a walker starting from $0$ and following the arrows. More precisely, we put recursively $\Xln_0 = 1$, and then $\Xln_{h + 1} \in \{0, \dots, N - 1\}$ such that 
$$\Fln_{h + 1} := F_{\Xln_{h + 1}} = \max_{i:\, d_N(i,\Xln_h) \le \lc\Fln_h\rc}F_{i, h}.$$
Additionally,
$$\Gln_h := \lm(2\Fln_h)$$
denotes the log-fitness. Similarly to $\{\Xln_h\}_{h \ge 0}$, we let $\{\Xrn_h\}_{h \ge 0}$ denote the walker started from a {uniformly chosen } random position in layer 0.

%
%
To prove the lower bound of Theorem \ref{distThm}, note that for every $h \ge 0$ and $\eta > 0$,
\begin{align}
	\P(H_N \le h) &\le \P\bi(d_N\bi(0, \Xln_h\bi) \ge \eta N\bi) + \P\bi(d_N\bi(\Xrn_0, \Xrn_h\bi) \ge \eta N\bi) + \P\bi(d_N\bi(0, \Xrn_0\bi) \le 2\eta N\bi) \nonumber \\
	&\le  2\P\bi(d_N\bi(0, \Xln_h\bi) \ge \eta N) + 2\eta. \label{eqHnBd}
\end{align}
For $\de \ge1$, we establish highly accurate upper bounds on the growth of the fitnesses $\Fln_h$ as a function of the layer $h$. Consequently, we also obtain bounds on the location 
\begin{align}
	\label{locBoundEq}
	d_N(0, \Xln_h) \le \Fln_0 + \cd + \Fln_{h - 1} + h
\end{align}
after $h$ steps, where the addition of $h$ on the right hand side arises from rounding.

Using the same fitnesses attached to the nodes yields a natural coupling between the model on a finite torus and the limit model on the integers $\Z$. We write $\{\Fl_h\}_{h \ge 0}$ for the fitnesses in this limit model. As long as $\vp(\Fln_i)\le N$, the wrapping around the torus is not observable so that, for $\eta\in(0,1)$, 
$$\Fln_0 + \cd + \Fln_{h - 1} + h \ge \eta N$$
if and only if in the coupled limit model
$$\Fl_0 + \cd + \Fl_{h - 1} + h \ge \eta N.$$
Hence, it suffices to study the limit model.

%
%
\subsection{Proof for $\de = 1$.}
\label{lowCritSec}
To bound the right-hand side in \eqref{locBoundEq}, we show that the log-fitnesses concentrate sharply around the current layer $h$.

%
%

\begin{lemma}[Fluctuations of log-fitnesses]
	\label{critTightLem}
	For $\delta=1$ and $\Gl_h :=\log_\mu(2\Fl_h)$, 
	$$\P\bi(\limsup_{h \to \i} h^{-2/3}\bi|\Gl_h - h\bi| = 0\bi) = 1.$$
\end{lemma}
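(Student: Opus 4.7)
The approach is to derive a random-walk representation for $\Gl_h$ using the max-stability of the Fr\'echet distribution, and then to invoke exponential concentration. Conditionally on $\Fl_h$, the next fitness $\Fl_{h+1}$ is the maximum of $N_h := 2\lc\Fl_h\rc + 1$ i.i.d.\ Fr\'echet$(1)$ variables that are independent of the past. Since $\P(\max_i F_i \le s) = e^{-N_h/s}$, this maximum is distributed as $N_h / E_h$ with $E_h\sim\mathsf{Exp}(1)$, and the family $(E_h)_{h\ge 0}$ is i.i.d.\ with $E_h$ independent of $\Fl_0,\ldots,\Fl_h$. Applying $\lm(2\,\cdot)$ to $\Fl_{h+1} = N_h/E_h$ and setting $R_h := \lm\bi((2\lc\Fl_h\rc + 1)/(2\Fl_h)\bi) \in [0,\,C/\Fl_h]$ gives
$$\Gl_{h+1} - \Gl_h \;=\; \f{\log 2}{\log\mu} \;-\; \f{\log E_h}{\log\mu} \;+\; R_h.$$
Recalling that $\log\mu = \log 2 + \gamma$ (with $\gamma$ the Euler--Mascheroni constant), the centered variables $Z_h := -\log(E_h)/\log\mu - \gamma/\log\mu$ are i.i.d., mean zero, and exponentially integrable near the origin since $\E[E^{-t}] = \Gamma(1-t)$ for $t<1$. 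Telescoping yields $\Gl_h - h = \Gl_0 + \sum_{k=0}^{h-1} Z_k + \sum_{k=0}^{h-1} R_k$, so the problem reduces to showing that both sums are $o(h^{2/3})$ almost surely.

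The i.i.d.\ sum is handled by a standard Cram\'er--Chernoff estimate: for every $\e > 0$,
$$\P\Bi(\bi|\sum_{k=0}^{h-1} Z_k\bi| > \e h^{2/3}\Bi) \;\le\; 2\exp(-c\e^2 h^{1/3}),$$
which is summable in $h$, so Borel--Cantelli gives $\sum_{k=0}^{h-1}Z_k = o(h^{2/3})$ a.s. Because $R_k \ge 0$, this already forces $\Gl_h \ge h/2$ for all sufficiently large $h$ almost surely, hence $\Fl_k \ge \mu^{k/2}/2$ eventually, so $R_k \le C\mu^{-k/2}$ eventually and $\sum_{k} R_k$ converges almost surely. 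Combining the two controls yields $|\Gl_h - h| = o(h^{2/3})$ a.s., as required.

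The only real subtlety is the mild bootstrap between the two error sums: summability of $\sum_k R_k$ requires a linear lower bound on $\Gl_h$, which itself relies on the concentration estimate for $\sum_k Z_k$. This is resolved without circularity by the one-sided inequality $R_k \ge 0$, so the i.i.d.\ sum alone already dominates the lower growth of $\Gl_h$. The remaining ingredients---exponential tail estimates for log-exponential random variables and Borel--Cantelli---are entirely standard, which is why the $h^{2/3}$ buffer (weaker than what a full LIL would yield) is chosen: it gives enough slack for a clean Cram\'er--Chernoff argument without needing to invoke sharp fluctuation theorems.
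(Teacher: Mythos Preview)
Your proof is correct and follows essentially the same route as the paper's: both decompose $\Gl_{h+1}-\Gl_h$ into a centered i.i.d.\ Gumbel increment plus a nonnegative rounding error $R_h\le C/\Fl_h$, use concentration (the paper cites a moderate-deviations theorem, you use Cram\'er--Chernoff and Borel--Cantelli) to show the i.i.d.\ sum is $o(h^{2/3})$, and then bootstrap via $R_h\ge 0$ to get $\Fl_h$ growing geometrically and hence $\sum_k R_k<\infty$. Your explicit $F=1/E$ representation and the remark about the one-sided inequality breaking the apparent circularity make the argument slightly more transparent, but the underlying idea is the same.
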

Before proving Lemma \ref{critTightLem}, we explain how to derive from it the lower bound on the distances.

\begin{proof}[Proof of Theorem \ref{distThm}; lower bound; $\de = 1$]
	Let $I \ge 0$ be an almost surely finite random variable such that, by Lemma \ref{critTightLem}, $\bi|\Gl_h - h\bi|\le h^{2/3}$ holds for all $h \ge I$.
	Then, inserting into \eqref{locBoundEq},
	\begin{align*}
		\P\Big(h + \sum_{i < h}\Fl_i \ge \eta N\Big) &\le \P\Bi(\sum_{i \le I} \Fl_i > \eta N /2\Bi) + \one\Bi\{h + \sum_{i \le h}\mu^{i + i^{2/3}} > \eta N/2\Bi\}\\
		&\le \P\Bi(\sum_{i \le I} \Fl_i > \eta N /2\Bi) + \one\bi\{h + h\mu^{h + h^{2/3}} > \eta N/2\bi\}.
	\end{align*}
The probability on the right hand side vanishes as $N\to\infty$. Further, we fix $\e' > 0$ and let $h = \lm(N) (1- \e')$, then the indicator vanishes as well in the limit $N\to\infty$ for arbitrary $\eta>0$. 
Inserting this into \eqref{eqHnBd} proves the claim. 
\end{proof}

The key towards obtaining the bounds on the scopes in Lemma \ref{critTightLem} is the max-stability of the Fr\'echet distribution:
 if $F_1, \dots, F_m$ are i.i.d.\ Fr\'echet random variables with tail index $\de = 1$, then $\max\{F_1, \dots, F_m\}$ has the same distribution as $mF$, where $F$ is again a Fr\'echet random variable with tail index 1. Moreover, $G_i = \lm(2F_i)$ follows a Gumbel distribution.

%
%

In particular, writing $m = \vp(\Fl_h)$ (recall \eqref{eqPhiDef} for the definition of $\vp$) and $F = F_{h + 1}$, we represent $\Fl_{h+1}$ recursively as 
	\begin{align}
		\label{cDiffEq}
		\Fl_{h + 1} = \vp(\Fl_h)F_{h + 1},
	\end{align}
	so that 
	\begin{align}
		\label{cDiffEq2}
		0 \le \Gl_{h + 1} - \Gl_h - G_{h + 1} 
		\le \lm\!\left(\f{1 + 2\lceil \Fl_h\rceil}{2\Fl_h}\right) 
		\le \lm(\r_h),
	\end{align}
	where $\r_h := 1 + \f3{2\Fl_h}.$ 
	Starting from this observation, we now prove Lemma \ref{critTightLem}.

	\begin{proof}[Proof of Lemma \ref{critTightLem}]
		In order to develop an intuition for the proof, we first establish the asserted lower bound on the growth. That is, 
		$$\P\bi(\li_{h \to \i} h^{-2/3}(\Gl_h - h) \ge 0\bi) = 1.$$
		Indeed, applying the bound \eqref{cDiffEq2}, 
		$$\Gl_h - h\ge \sum_{i \le h} (G_i - 1),$$
		where $\{G_h - 1\}_{h \ge 0}$ is an i.i.d.\ sequence of centered random variables with finite exponential moments. Hence, by moderate deviations \cite[Theorem 11.2]{mdp}, almost surely,
		$$\lim_{h \to \i}h^{-2/3}\sum_{j \le h}(G_j - 1) = 0.$$

		Moreover, now the lower bound on the growth of $\Gl_h$ implies that the error terms of the form $h^{-2/3}\sum_{j \le h} \lm(\r_h)$ in \eqref{cDiffEq2} tend to 0 as $h \to \i$, thereby concluding the proof.
	\end{proof}

%
%
\subsection{Proof for $\de < 1$.}
\label{lowHeavSec}

In the heavy-tailed setting, we need a strong tightness property for the log-fitnesses.

%
%
\begin{lemma}[Tightness for heavy tails]
	\label{heavTightLem}
	For $\delta<1$, 
	$$\P\bi(\lim_{h \to \i}\de^h\Gl_h \in (0, \i) \bi) = 1.$$
\end{lemma}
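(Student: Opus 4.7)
My approach mirrors Lemma~\ref{critTightLem}, but exploiting the max-stability of the Fr\'echet distribution for general tail index $\delta$: if $F_1,\dots,F_m$ are i.i.d.\ Fr\'echet with tail index $\delta$, then $\max_i F_i \stackrel{d}{=} m^{1/\delta}F$ for another Fr\'echet variable $F$ of the same tail index. Applied (via the appropriate coupling) to the window of size $\varphi(F^L_h)$, this gives the $\delta$-dependent analogue $F^L_{h+1} = \varphi(F^L_h)^{1/\delta}F_{h+1}$ of \eqref{cDiffEq}, with $\{F_h\}_{h\ge 1}$ i.i.d.\ Fr\'echet. Taking $\log_\mu$ and controlling the ceiling exactly as in the derivation of \eqref{cDiffEq2} yields
\[
0 \le G^L_{h+1} - \tfrac{1}{\delta} G^L_h - G_{h+1} \le \tfrac{1}{\delta}\log_\mu \rho_h, \qquad \rho_h := 1 + \tfrac{3}{2 F^L_h},
\]
where $G_{h+1} := \log_\mu(2F_{h+1})$ is i.i.d., a translate of the Gumbel law with finite mean and variance.

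Setting $Y_h := \delta^h G^L_h$ and telescoping,
\[
Y_h = G^L_0 + \sum_{i=1}^h \delta^i G_i + \sum_{i=1}^h \delta^i \eta_{i-1}, \qquad \eta_{i-1} \in [0, \tfrac{1}{\delta}\log_\mu \rho_{i-1}].
\]
I would show a.s.\ convergence of both series. The first converges a.s.\ by Kolmogorov's two-series theorem, since the $G_i$ are i.i.d.\ with finite variance and $\sum_i \delta^{2i} < \infty$. For the second, $\E[\log_\mu \rho_{i-1}]$ is finite because the logarithmic singularity at $F=0$ is integrable against the Fr\'echet density and $F^L_{i-1}$ stochastically dominates a single Fr\'echet variable (indeed, $F^L_{i-1}\ge F_{j,i-1}$ for any $j$ in the visible window). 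Hence $\sum_i \delta^i\E[\eta_{i-1}]<\infty$, giving absolute a.s.\ convergence. Thus $Y_h \to L \in \R$ a.s.

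The main obstacle is showing $L\in(0,\infty)$ a.s.\ Finiteness is immediate from the above. For positivity, I would first iterate the recursion, combining the absolute bound $\varphi(F^L_h)\ge 3$ at the initial step with the standard inequality $G^L_{h+1}\ge G^L_h/\delta + G_{h+1}$ afterwards; this produces the a.s.\ lower bound $L\ge \log_\mu 3 + \sum_{i=1}^\infty \delta^i G_i$, whose expectation is strictly positive (since $\E[G]>0$ and $\delta<1$), so that $\P(L>0)>0$. To upgrade this to $\P(L>0)=1$, I would invoke tail triviality of the Markov chain $\{G^L_h\}$: defining $L^{(h)}:=\lim_{h'\to\infty}\delta^{h'-h}G^L_{h'}$, the scaling relation $L=\delta^h L^{(h)}$ gives $\{L>0\}=\{L^{(h)}>0\}$ for every $h$, placing the event in the tail $\sigma$-algebra of $\{G^L_h\}$; combined with the monotonicity of $L$ in the initial condition $F_{0,0}$ and with the i.i.d.\ nature of the innovations, this forces $\P(L>0)\in\{0,1\}$. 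Cleanly establishing this tail-triviality step is the principal technical hurdle.
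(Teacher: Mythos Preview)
Your setup, recursion, and telescoping are essentially the same as the paper's, and your convergence argument is actually a bit more direct: you bound $\sum_i \delta^i\E[\eta_{i-1}]$ via stochastic domination of $F^{\mathsf L}_{i-1}$ by a single Fr\'echet variable, whereas the paper first proves positivity of the $\liminf$ and only then controls the error sum. Either route yields a.s.\ existence of the limit $L$. (A small slip: your lower bound should read $L\ge G^{\mathsf L}_0+\sum_{i\ge1}\delta^iG_i$, not $\log_\mu 3+\cdots$; but this still gives $\P(L>0)>0$.)

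The genuine gap is exactly where you flag it. You place $\{L>0\}$ in the tail $\sigma$-algebra of the \emph{Markov chain} $\{G^{\mathsf L}_h\}$ and then invoke an unproved tail-triviality. That is not automatic: tail $\sigma$-fields of Markov chains need not be trivial, and this chain is neither i.i.d.\ nor Harris recurrent (it blows up). Monotonicity in the initial condition does not by itself produce a $0$--$1$ law either. So as written, the upgrade from $\P(L>0)>0$ to $\P(L>0)=1$ is missing.

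The paper bypasses this by applying Kolmogorov's $0$--$1$ law to the \emph{i.i.d.\ innovation sequence} $\{G_j\}_{j\ge1}$ rather than to $\{G^{\mathsf L}_h\}$. For each $i$, the event
\[
E_i\;:=\;\Big\{\,G_i>0 \text{ and } \sum_{j\ge i}\delta^jG_{j+1}>0\,\Big\}
\]
depends only on $\{G_j\}_{j\ge i}$ and has a fixed positive probability independent of $i$. Reverse Fatou together with the $0$--$1$ law then gives $\P(\limsup_i E_i)=1$, so there is a.s.\ a finite random $I$ with $E_I$. Since $G^{\mathsf L}_I\ge G_I>0$, the lower half of the telescoped recursion started at time $I$ yields
\[
\liminf_{h\to\infty}\delta^hG^{\mathsf L}_h \;\ge\; \delta^IG^{\mathsf L}_I+\sum_{j\ge I}\delta^jG_{j+1}\;>\;0.
\]
This is the missing idea: work with the innovations, not the chain, and locate a random restart time at which both the current state and the remaining innovation tail are simultaneously favourable.
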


First, we explain how Lemma \ref{heavTightLem} enters the proof of the lower bound.
\begin{proof}[Proof of Theorem \ref{distThm}; lower bound; $\de < 1$]
	Let $I$ be a strictly positive random variable such that $\de^h\Gl_h \le I$ holds almost surely for all $h \ge 0$. 
	Hence, writing $h = \ll(N) - K$ and inserting the bound from Lemma \ref{heavTightLem} into the representation from \eqref{locBoundEq} gives that 
	\begin{align*}
\P\big(h + \sum_{j < h}\Fl_i \hspace{-.05cm}>\hspace{-.05cm} \eta N\big)
		 \le \P\bi(h + \sum_{j < h}\mu^{\de^{-j}I} \hspace{-.05cm}>\hspace{-.05cm} \eta N\bi) 
		 \le \P\bi(h + h\mu^{\de^{-h}I} \hspace{-.05cm}>\hspace{-.05cm} \eta N\bi) 
		 = \P\bi(h(1 + N^{\de^K I}) \hspace{-.05cm}>\hspace{-.05cm} \eta N \bi).
	\end{align*}
	We conclude the proof by noting that the right-hand side tends to 0 if we first take $N$ sufficiently large and then let $K$ tend to $\i$.
\end{proof}

%
%
It remains to show Lemma \ref{heavTightLem}. The proof mimics the arguments presented in Lemma \ref{critTightLem}. Therefore, we present in detail only those arguments that are substantially different. The key identity now reads
	\begin{align}
		\label{hDiffEq}
		\Fl_{h + 1} = \bi(\vp(\Fl_h)F_{h + 1}\bi)^{1/\de},
	\end{align}
	so that, as in \eqref{cDiffEq2}, 
	\begin{align}\label{hDiffEq2}
		0 \le \de\Gl_{h + 1} - \Gl_h - G_{h + 1} \le \lm(\r_h),
	\end{align}
	where $\r_h := 1 + \f3{2\Fl_h}.$ 
\begin{proof}[Proof of Lemma \ref{heavTightLem}]
 First, we iterate \eqref{hDiffEq2} to get that for every $h_2 \ge h_1 \ge 1$, 
	\begin{equation}
		\label{heavRecEq}
		0 \le 		\de^{h_2}\Gl_{h_2} - \de^{h_1}\Gl_{h_1} - \sum_{h_1 \le j < h_2}\de^jG_{j + 1}  \le   \sum_{h_1 \le j < h_2 }\de^j\lm(\r_j).
	\end{equation}
Now,	the key step is to show that 
		\begin{align}
			\label{heavLimEq}
			\P\bi(\li_{h \to \i}\de^h\Gl_h > 0 \bi) = 1.
		\end{align}
		Then, almost surely, 
		$$\lim_{h \to \i}\sup_{h_2 \ge h_1 \ge h}\sum_{h_1 \le j < h_2} \de^j \lm(\r_j) = 0.$$
		Moreover, by the Borel-Cantelli lemma also 
		$$\sup_{h_2 \ge h_1 \ge h}\sum_{h_1 \le j < h_2} \de^j G_{j + 1}$$
		tends to 0 almost surely as $h \to \i$. Hence, $\de^h\Gl_h$ converges to an almost surely finite limit. 

		%
		%
		It remains to show \eqref{heavLimEq}. To achieve this goal, we assert that there exists an almost surely finite random variable $I$ such that 
		\begin{align}
			\label{kol_eq}
			\min\Bi\{\Gl_I, \sum_{j \ge I}\de^j G_{j + 1}\Bi\}> 0.
		\end{align}
		Once \eqref{kol_eq} is established, we obtain that 
	\begin{align*}
		\de^h\Gl_{h + 1}  \ge  \de^I\Gl_I + \sum_{I \le j < h + 1} \de^jG_{j + 1}\ge \sum_{I \le j < h + 1}\de^jG_{j + 1},
		\end{align*}
		so that taking the limit as $h \to \i$ concludes the proof.

		To prove \eqref{kol_eq}, we may first apply the Borel-Cantelli lemma to see that the sum $\sum_{j \ge 0}\de^j G_{j + 1}$ converges almost surely. Hence, 
	$$\inf_{i \ge 1}\P\Bi(\min\Bi\{G_i, \sum_{j \ge i}\de^j G_{j + 1}\Bi\}> 0\Bi) = \P\Bi(\min\Bi\{G_1, \sum_{j \ge 1}\de^j G_{j + 1}\Bi\}> 0\Bi) > 0.$$
	In particular, the Kolmogorov 0-1-law yields almost surely finite random variable $I$ such that 
			$$\min\Bi\{G_I, \sum_{j \ge I}\de^j G_{j + 1}\Bi\}> 0.$$
			Since $\Gl_i \ge G_i$ for every $i \ge 1$, this observation concludes the proof of \eqref{kol_eq}.
\end{proof}

%
%
\ssec{Proof for $\de > 2$.}
\label{lowLightSec}
In the light-tailed setting, we show that the suitably rescaled walker $\{\Xl_h\}_{h \ge 0}$ satisfies the invariance principle. 

%
%
\begin{lemma}[Invariance principle]
	\label{invPrinLem}
	Let $\de > 2$. 
	{Then, $\big\{h^{-1/2}\Xl_{ht} \big\}_{t \le 1}$ converges in distribution as $h\to \ff$ to some Brownian motion $\{B_t\}_{t \le 1}$.}
\end{lemma}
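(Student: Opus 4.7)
The plan is to recognise $\{\Xl_h\}_{h\ge 0}$ as a partial-sum process of conditionally independent, mean-zero increments whose conditional variances are governed by the ergodic fitness chain $\{\Fl_h\}_{h \ge 0}$, and then invoke a functional central limit theorem. Write $\FF_h$ for the $\sigma$-field generated by the fitnesses in layers $0, \ld, h$ together with $\Xl_0, \ld, \Xl_h$. Conditional on $\FF_h$, the next site $\Xl_{h+1}$ is the argmax of the $2\lc \Fl_h\rc + 1$ fresh i.i.d.\ Fréchet variables placed at positions $\Xl_h + j$, $j \in \{-\lc \Fl_h\rc, \ld, \lc \Fl_h\rc\}$. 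By exchangeability the argmax position relative to $\Xl_h$ is uniform on this window, so
\[ \E\bi[\Xl_{h+1} - \Xl_h \bi| \FF_h\bi] = 0 \quad\text{and}\quad \text{Var}\bi(\Xl_{h+1} - \Xl_h \bi| \FF_h\bi) = \tfrac 13 \lc \Fl_h\rc (\lc \Fl_h\rc + 1). \]
Since among i.i.d.\ continuous random variables the position of the maximum is independent of its value, conditional on the entire fitness sequence $\{\Fl_h\}_{h \ge 0}$ the increments $\{\Xl_{h+1} - \Xl_h\}_{h \ge 0}$ are in fact mutually independent.

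For $\de > 1$, rearranging \eqref{hDiffEq2} yields the contractive affine relation $\Gl_{h+1} = \Gl_h/\de + G_{h+1}/\de + O(\de^{-1}\lm \r_h)$, and a standard coupling argument shows that the Markov chain $\{\Fl_h\}_{h \ge 0}$ converges exponentially fast to a unique stationary law $\pi$. A tail analysis, based on the fact that $\pi$ is a fixed point of the update $Y \mapsto (\vp(Y) F)^{1/\de}$ with $F$ an independent Fréchet variable, identifies a polynomial tail for $\pi$; for $\de > 2$ its exponent exceeds $2$, so $\E_\pi[\Fl^2] < \i$. The pointwise ergodic theorem then delivers
\[ \f 1h \sum_{i < h} \text{Var}\bi(\Xl_{i+1} - \Xl_i \bi| \FF_i\bi) \xrightarrow{h \to \i} \s^2 := \E_\pi\bi[\tfrac 13 \lc \Fl\rc(\lc \Fl\rc + 1)\bi] \in (0, \i) \]
almost surely, and analogously along any sub-interval.

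Conditional on the fitness trajectory, $\{h^{-1/2}\Xl_{\lfloor ht\rfloor}\}_{t \in [0, 1]}$ is a rescaled partial-sum process of independent mean-zero random variables. The Lindeberg--Feller functional CLT then applies: the Lindeberg condition follows from uniform integrability of $\lc \Fl \rc^2$ under $\pi$, a consequence of $\de > 2$. The conditional weak limit is $\s \{B_t\}_{t \le 1}$, and since this law does not depend on the fitness realisation, the invariance principle passes to the unconditional distribution; a short coupling step absorbs the possibly non-stationary initial fitness $\Fl_0$. The main technical obstacle is the fitness-chain analysis in the previous paragraph: establishing geometric ergodicity of $\{\Fl_h\}_{h \ge 0}$ and pinning down the tail of its invariant measure sharply enough to guarantee a finite second moment in the asserted regime $\de > 2$. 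Once this is in hand, the rest is a routine application of the conditional/martingale functional CLT.
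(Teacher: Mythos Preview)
Your proposal is correct and follows essentially the same strategy as the paper: both identify $\{\Xl_h\}$ as a martingale whose conditional second moments are functionals of the ergodic fitness chain $\{\Fl_h\}$, establish that this chain has an invariant law with finite second moment when $\de > 2$, and then feed this LLN into a functional CLT.

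The differences are in execution rather than substance. For the fitness chain, the paper does not argue via coupling or a direct tail analysis of $\pi$; instead it writes down a Foster--Lyapunov drift condition
\[
\E\bi[V(\Fl_1)\,\big|\,\Fl_0 = f\bi] \le \tfrac12 V(f) + K\one\{f \le K\}, \qquad V(f) = 1 + f^\a,\ \a < \de,
\]
which follows in two lines from the recursion $\Fl_{h+1} = (\vp(\Fl_h)F_{h+1})^{1/\de}$, and then quotes the standard Meyn--Tweedie machinery to obtain Harris and positive recurrence together with $\int f^\a\,\pi(\d f) < \i$ for every $\a < \de$. This is shorter and more robust than the coupling-plus-tail-analysis you sketch, and it immediately delivers the $(2+\e_0)$-moment needed for the Lyapunov/Lindeberg condition. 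For the CLT step, the paper applies a martingale functional CLT directly (conditions {\bf (M1)}, {\bf (M2')}), whereas you quench on the fitness trajectory and invoke Lindeberg--Feller for independent summands; your observation that the argmax position is independent of the max value makes this legitimate, and the two routes are interchangeable here. The one place where your write-up is genuinely thin is the ``standard coupling argument'' and ``tail analysis'' for the fitness chain---that is exactly the step the paper's drift-condition lemma handles cleanly, and it would be worth adopting.
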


{Throughout we write $ht$ for $\lfloor ht\rfloor$.} Before establishing Lemma \ref{invPrinLem}, we explain how to conclude the proof of the lower bound. {Mind that, for the lower bound, a central limit theorem suffices. However, for the proof of the upper bound in the next section, we need a full functional CLT. }
\begin{proof}[Proof of Theorem \ref{distThm}; lower bound; $\de > 2$]
	The invariance principle in the form of Lemma \ref{invPrinLem} {for $t=1$} gives  
	$$\lim_{\e \to 0}\lim_{N \to \ff}\P(\Xl_{\e N^2} \ge \e^{1/4} N) = \lim_{\e \to 0}\lim_{N \to \ff}\P(\Xl_{\e N^2}(\e N^2)^{-1/2} \ge \e^{-1/4} )= 0,$$
	as asserted. 
\end{proof}

%
%
In order to prove Lemma \ref{invPrinLem}, we rely on the general martingale functional CLT from \cite[Theorem D.6.4]{meyn}. To cast this problem in the setting of the present context, we let
$$\FF_h = \s(\{\Fl_j, \Xl_j\}_{j \le h}).$$
denote the information provided by the positions of the walker and the corresponding fitnesses up to layer $h$. Then, $M_h := \Xl_h$ form a square-integrable martingale with respect to the filtration $\{\FF_h\}_{h \ge 0}$. In order to apply \cite[Theorem D.6.4]{meyn}, we need to verify two conditions.

\begin{enumerate}
	\item[]{\bf (M1).} Almost surely,
		$$\lim_{h \to \ff} \f1h \sum_{j \le h} \E[(M_j - M_{j - 1})^2\, |\, \FF_{j - 1 }] = \g^2,$$
		for some constant $0 < \g^2 < \ff$.
	\item[]{\bf (M2).} Almost surely, for every $\e > 0$,
		$$\lim_{h \to \ff} \f1h \sum_{j \le h} \E[(M_j - M_{j - 1})^2\one\{(M_j - M_{j - 1})^2 \ge \e n\}\, |\, \FF_{j - 1}] = 0.$$
\end{enumerate}
Note that, when fixing any $\e_0 > 0$, condition {\bf (M2)} follows from the following Lyapunov-type condition.
\begin{enumerate}
	\item[]{\bf (M2').} Almost surely,
		$$\lim_{h \to \ff} \f1{h^{1 + \e_0}} \sum_{j \le h} \E[(M_j - M_{j - 1})^{2 + \e_0}\, |\, \FF_{j - 1}] = 0.$$
\end{enumerate}

%
%
Before establishing the invariance principle for the random walk $\{\Xl_h\}_{h \ge 0}$, we first show that the underlying Markov chain of fitnesses $\{\Fl_h\}_{h \ge 0}$ satisfies a Foster-Lyapunov drift condition, thereby forming the basis for a Markov-chain LLN.
\bel[Drift condition]
Let $\de > 1$, $\a < \de$ and set $V(f) = 1 + f^\a$. Then, there exists $K = K(\a) > 0$ such that for all $f > 0$, 
	\begin{align}
		\label{driftEq}
		\E\bi[V(\Fl_1)\,|\, \Fl_0 = f\bi] \le \f12 V(f) + K \one\{f \le K\}.
	\end{align}
\enl
\bep
	To bound
	$ \E\bi[V(\Fl_1)\,|\, \Fl_0 = f\bi],$
	we leverage recursion \eqref{hDiffEq} to deduce that for every $f > 3$,
	$$\E\bi[(\Fl_1)^\a\,|\, \Fl_0 = f\bi] = \vp(f)^{\a/\de} \E\bi[F_1^{\a/\de}\bi] \le  \E\bi[F_1^{\a/\de}\bi] 3^{\a/\de} f^{\a/\de}.$$
	Hence, since $\de > 1$, there exists $K > 3$ such that 
	$$\E\bi[(\Fl_1)^\a\,|\, \Fl_0 = f\bi] \le \f12 f^\a$$
	holds for all $f > K$, thereby verifying the drift equation \eqref{driftEq}.
\enp

%
%
Now, we have collected all ingredients to prove the invariance principle, i.e., Lemma \ref{invPrinLem}.
\begin{proof}[Proof of Lemma \ref{invPrinLem}]\phantom{a}~\\
	{\bf Condition (M1).} 
First, $\E[(M_j - M_{j - 1})^2\, |\, \FF_{j - 1}] = w(\Fl_j)$, where
	$$w(r) = \f{\vp(r)^2 - 1}{12}$$
	denotes the variance of the uniform distribution on $\vp(r)$ consecutive integers.
	Then, condition {\bf (M1)} becomes
	\begin{align}
		\label{m1Eq}
		\lim_{h \to \ff} \f1h \sum_{j \le h} w(\Fl_j)  = \g^2.
	\end{align}
	This is a prototypical Markov-chain LLN that follows from the drift condition \eqref{driftEq} through \cite[Theorem 17.0.1]{meyn}. More precisely, we deduce from \eqref{driftEq} and \cite[Theorem 9.1.8]{meyn} that the chain $\{\Fl_h\}_{h \ge 0}$ is Harris recurrent. Next, by \cite[Theorem 14.0.1]{meyn}, it is also positive recurrent with an invariant measure $\pi$ satisfying  $\int_0^\ff x^2 \pi(\d x) < \ff$.  Hence, the asserted LLN in \eqref{m1Eq} follows  from \cite[Theorem 17.1.7]{meyn}.
	
	{\bf Condition (M2').} Similarly, let now $w_{\e_0}(r)$ denote the centered $(2 + \e_0)$-th moment of a uniform random variable on $\vp(r)$ consecutive integers. Then, {\bf (M2')} becomes
	$$\lim_{h \to \ff} \f1{h^{1 + \e_0}} \sum_{j \le h} w_{\e_0}(\Fl_j) = 0,$$
	which again follows from the Markov LLN \cite[Theorem 17.1.7]{meyn}.
\end{proof}

%
%
\ssec{Proof for $\de \in (1, 2)$}
\label{stab_sec}
Finally, we deal with the stable case, i.e.,  $1 < \de < 2$. In the light-tailed setting, a key ingredient was the invariance principle in the form of Lemma \ref{invPrinLem}, which stated that the rescaled walker $\{h^{-1/2}\Xl_{ht}\}_{ t \in [0, 1]}$ converges to Brownian motion as $h \to \ff$. 
Now, we need a stable analog of this result. More precisely, we establish convergence to a symmetric $\de$-stable process with L\'evy measure 
\begin{align}
	\label{levEq}
	\f{\nu(\d x)}{\d x} =  \f{c(\de)}{|x|^{\de + 1}} \one\{ x \ne 0\},
\end{align}
for some $c(\de) > 0$. 

%
%
\begin{lemma}[Stable limit]
	        \label{stabPrinLem}
			Let $\de \in (1, 2)$. Then, $\big\{h^{-1/\de}\Xl_{ht} \big\}_{t \le 1}$ converges in distribution to symmetric $\de$-stable processes.
\end{lemma}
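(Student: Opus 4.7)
The plan is to parallel the proof of Lemma \ref{invPrinLem}, replacing the martingale invariance principle by a functional stable limit theorem for martingale difference arrays, such as the ones due to Durrett--Resnick or Jakubowski. The martingale representation $\Xl_h = M_h$ in the filtration $\FF_h$ with conditionally symmetric increments $\xi_j := M_j - M_{j - 1}$ -- uniformly distributed on $\{-\lc\Fl_{j - 1}\rc, \ld, \lc\Fl_{j - 1}\rc\}$ given $\FF_{j - 1}$ -- carries over verbatim from the $\de > 2$ case. What changes is that for $\de \in (1,2)$ the invariant measure of the chain $\{\Fl_h\}$ no longer integrates the square of these increments, so the conditional variance $w(\Fl_{j - 1})$ must be replaced by the conditional tail as the relevant functional of the chain.

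The next step is to describe the stationary regime of $\{\Fl_h\}$. The Foster--Lyapunov drift condition established above remains valid for any $\alpha < \de$, and in particular for some $\alpha \in (1, \de)$, so $\{\Fl_h\}$ is positive Harris recurrent with unique invariant measure $\pi$, and the Markov ergodic theorem \cite[Theorem~17.1.7]{meyn} applies. The new ingredient is the tail of $\pi$. Writing $Y_h := \lm(2\Fl_h)$ and iterating \eqref{hDiffEq2} gives $Y_h = \de^{-1} Y_{h - 1} + \de^{-1} G_h + O(\Fl_{h - 1}^{-1})$ with $G_h$ Gumbel. Since $\de > 1$, the stationary solution is essentially $Y_\infty = \sum_{k \ge 0} \de^{-k - 1} G_{-k}$, whose right tail is dominated by the leading summand and satisfies $\P(Y_\infty > t) \asymp e^{-\de t}$. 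Translating back, $\pi(\Fl > s) \asymp s^{-\de}$, so under $\pi$ the increment has tail $\P_\pi(|\xi_j| > s) \asymp s^{-\de}$, exactly the regular variation needed for a symmetric $\de$-stable limit with L\'evy measure \eqref{levEq}.

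It then remains to verify the hypotheses of the stable FCLT for the rescaled martingale $M_t^{(h)} := h^{-1/\de}\Xl_{ht}$. These reduce to convergence of the predictable tail measure,
\begin{equation*}
\sum_{j \le ht} \P\bi(\xi_j > h^{1/\de} x \,\big|\, \FF_{j - 1}\bi) \xrightarrow{\text{P}} t\, \tfrac{c(\de)}{2}\, x^{-\de} \qquad (x > 0,\ t \in [0, 1]),
\end{equation*}
its symmetric counterpart, and the standard small-jump negligibility condition. Each conditional probability is an explicit function $\psi_{h, x}(\Fl_{j - 1})$ of the chain, and the Markov LLN yields $\f1h \sum_{j \le h} \psi_{h, x}(\Fl_{j - 1}) \to \int \psi_{h, x}\, d\pi \sim \tfrac{c(\de)}{2}\, x^{-\de} h^{-1}$; summing over $j \le ht$ recovers the desired rate, and symmetry of $\xi_j$ removes any need for centering.

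The main obstacle will be that the functions $\psi_{h, x}$ depend on the scaling parameter $h$, so the Markov LLN is not directly applicable. I would handle this through the standard truncation argument: split $\psi_{h, x}$ into a bounded part, for which the ergodic theorem applies directly, and a tail part, whose contribution is controlled using the Pareto tail of $\pi$ derived above. Upgrading finite-dimensional convergence to convergence in the Skorokhod $J_1$ topology then reduces to a tightness estimate, which in turn follows from a moment bound on the truncated martingale together with Aldous' criterion.
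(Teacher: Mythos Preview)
Your overall strategy---verify the conditions of a Durrett--Resnick type stable FCLT via Markov-chain ergodic theory for $\{\Fl_h\}$---is precisely the paper's route. The gap is in how you handle the $h$-dependent functional, and it is not merely technical.

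With the natural filtration containing $\Fl_{j-1}$, the conditional tail is essentially $\psi_{h,x}(\Fl_{j-1})=\big(\tfrac12-\tfrac{h^{1/\de}x}{2\Fl_{j-1}}\big)_+$, which is supported on $\{\Fl_{j-1}>h^{1/\de}x\}$. For large $h$ the ``bounded part'' of your truncation is identically zero, so the entire sum is tail. More seriously, the compensator $\sum_{j\le ht}\psi_{h,x}(\Fl_{j-1})$ does \emph{not} converge in probability to a deterministic limit: since $\pi(\Fl>s)\asymp s^{-\de}$ and the drift condition forces immediate descent from high levels, the number of $j\le h$ with $\Fl_{j-1}>h^{1/\de}x$ is asymptotically Poisson with mean of order $x^{-\de}$, and each exceedance contributes an $O(1)$ amount to the sum. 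A second-moment bound confirms that the variance of the compensator stays bounded away from zero. Hence condition (D1) fails for this filtration, and the ``standard truncation argument'' cannot rescue it.

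The paper's resolution is to use a \emph{coarser} filtration $\FF_{j-1,h}=\s(Z_{1,h},\dots,Z_{j-1,h},\Fl_1,\dots,\Fl_{j-2})$, which excludes $\Fl_{j-1}$. Conditioning on $\Fl_{j-2}$ and integrating out the Fr\'echet variable $F_{j-1}$ via the recursion $\Fl_{j-1}=(\vp(\Fl_{j-2})F_{j-1})^{1/\de}$ and an elementary computation (the paper's Lemma~\ref{fre_comp_lem}) gives
\[
\P(Z_{j,h}>x\mid\FF_{j-1,h})=\frac{\Fl_{j-2}}{2^\de h x^\de(\de+1)}\,(1+o_h(1)),
\]
which is linear in $\Fl_{j-2}$ with the $h$-dependence factored out explicitly. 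Now the standard Markov LLN for the fixed, $\pi$-integrable functional $f\mapsto f$ delivers (D1) with a deterministic limit; (D2) and (D3) follow in the same way. The filtration shift, not a truncation, is the missing idea; in particular your analysis of the tail of $\pi$ is not needed.
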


Before establishing Lemma \ref{stabPrinLem}, we elucidate how it gives the tightness of $(H_N / N^\de)$ away from 0. Essentially, this relies on the same line of arguments that we have seen in Section \ref{lowLightSec}.

\begin{proof}[Proof of Theorem \ref{distThm}; lower bound; $\de \in (1, 2)$]
Invoking Lemma \ref{stabPrinLem} gives that
			$$\lim_{\e \to 0}\lim_{N \to \ff}\P(\Xl_{\e N^\de} \ge \e^{1/(2\de)} N) = \lim_{\e \to 0}\lim_{N \to \ff}\P(\Xl_{\e N^\de}(\e N^\de)^{-1/\de} \ge \e^{-1/(2\de)} )= 0,$$
			        as asserted.
\end{proof}

In order to prove Lemma \ref{stabPrinLem}, we proceed as in \cite{rem} and apply the versatile functional limit theorem \cite[Theorem 4.1]{resnick}.

%
%
We now formulate a version of \cite[Theorem 4.1]{resnick}, where we adapted (actually simplified) the conditions to our needs:
Let $\{Z_{j, h}\}_{1 \le j \le h}$ be a triangular array of centered random variables and let $\{\FF_{j, h}\}_{1 \le j \le h}$ be a triangular array of $\s$-algebras such that $Z_{j, h}$ is $\FF_{j, h}$ measurable. 
Now, assume the following conditions: 
\begin{enumerate}
	\item[]{\bf (D1)} There exists a symmetric measure $\nu$ such that for all $x > 0$ and $t \le 1$, 
		$$\sum_{j \le ht}\P(Z_{j, h} > x\,|\, \FF_{j-1,h}) \to t \nu([x, \ff)) 
		\qquad\text{ in probability as $h \to \ff$.}$$
        \item[]{\bf (D2)}
                For $\e > 0$,
		$$\sum_{j \le h}\P(|Z_{j, h}| > \e\,|\, \FF_{j-1,h})^2 \to 0
		\qquad\text{ in probability as $h \to \ff$.}$$
        \item[]{\bf (D3)}
                For $\eta, \e > 0$,
		$$\lim_{\eta \to 0}\ls_{h \to \ff}\P\Big(\sum_{j \le h}\E\big[Z_{j, h}^2\one\{|Z_{j, h}| \le \eta\}\,| \FF_{j-1,h}\big] > \e\Big) = 0.$$
\end{enumerate}
Then, $\big\{\sum_{j \le ht}Z_{j, h}\big\}_{t \le 1}$ converges in distribution as $h\to\infty$ to a symmetric stable process with L\'evy measure $\nu$.

%
%
We now use this criterion to prove convergence with the $\delta$-stable L\'evy measure $\nu$ as in \eqref{levEq}. 
To this end, we set 
$$Z_{j, h} := h^{-1/\de}(\Xl_j - \Xl_{j - 1})$$ 
and let 
$$\FF_{j, h}:= \s(Z_{1, h}, \dots, Z_{j, h}, \Fl_{1, h}, \dots, \Fl_{j - 1, h})$$ 
be the $\s$-algebra generated by the increments up to layer $h$ and the fitnesses up to layer $h - 1$. 
To verify conditions {\bf (D1)}--{\bf (D3)}, we rely on explicit computations with Fr\'echet random variables that we present as a separate auxiliary result.
\bel[Fr\'echet computations]
\label{fre_comp_lem}
Let  $F$ be a standard Fr\'echet random variable with tail index 1. Then,
\been
\im for every $\de > 1$,
$$	\lim_{a \to \ff} a\E[(1 - (a/F)^{1/\de})_+] = \f1{\de + 1},$$
\im for every $\de \in (1, 2)$ and $\eta > 0$,
	$$\lim_{a \to \ff}a\E\big[\eta^2  \wedge (F/a)^{2/\de}\big] = \f{2\eta^{2 - \de}}{2 - \de}.$$
\enen
\enl

\medskip
We postpone the proof of the lemma and first show how it implies the proof of Lemma \ref{stabPrinLem}. 
%
%
\subsubsection*{Verification of condition {\bf (D1)}}
\label{cd1_sec}
To verify condition {\bf (D1)}, we need to compute the conditional expectation $\P(Z_h > x\,|\,\FF_{j-1,h})$. Now, similar to the proof for $\de>2$, the key insight is that $\Xl_j$ is distributed uniformly in the scope of size $\vp(\Fl_{j - 1})$. As an initial observation, we note that $\max_{j\le h}\Fl_j / h\in o_h(1)$ with high probability. Indeed, by the Markov inequality, for any $\a \in (1, \de)$, 
$$\P(\max_{j \le h} \Fl_j > h) \le \sum_{j \le h}\P((\Fl_j)^\a > h^\a) \le \f1{h^\a}\sum_{j \le h}\E[(\Fl_j)^\a],$$
so that similarly to the arguments in Section \ref{lowLightSec}, we may invoke the Markov ergodic theorem, \cite[Theorem 14.0.1]{meyn}. 

We also recall from \eqref{hDiffEq} that 
$\Fl_{j - 1} = \vp(\Fl_{j - 1})^{1/\de}F_{j - 1}^{1/\de}$.
Hence, by part (1) of Lemma \ref{fre_comp_lem}, 
\begin{align}
	\P(Z_{j, h} > x\,|\, \FF_{j-1,h}) &= \E\Big[\Big(1 - \f{h^{1/\de}x}{2\vp(\Fl_{j - 2})^{1/\de}F_{j - 1}^{1/\de}}\Big)_+\,\big|\, \FF_{j-1,h} \Big]\,(1 + o_h(1)) \nonumber \\ 
	&= \f{\Fl_{j - 2}}{2^\de hx^\de(\de + 1)}(1 + o_h(1)).\label{eqPZkBd}
\end{align} 
Finally, as in the computations in Section \ref{lowLightSec}, we deduce that there is an LLN, so that
$\f1h\sum_{j \le h}\Fl_{j - 2}$
converges weakly.

%
%
\subsubsection*{Verification of condition {\bf (D2)}} 
We use \eqref{eqPZkBd} to get
$$\P(Z_{j, h} > x\,|\, \FF_{j-1,h}) = \f{\Fl_{j - 2}}{2^\de hx^\de(\de + 1)} (1 + o_h(1)).$$
We observed already before that $\max_{j\le h}\Fl_j / h\in o_h(1)$ with high probability. Hence,
$$\max_{j \le h}\P(Z_{j, h} > x\,|\, \FF_{j-1,h}) \to 0,$$
which, together with \textbf{(D1)}, implies the desired claim.

%
%
\subsubsection*{Verification of condition {\bf (D3)}}
Finally, we show that 

	$$\lim_{\eta \to 0}\ls_{h \to \ff}\P\Big(\sum_{j \le h}\E\big[Z_{j, h}^2\one\{|Z_{j, h}| \le \eta\}\,| \FF_{j-1,h}\big] > \e\Big) = 0.$$
		First, conditioned on the event $\{|Z_{j, h}| \le \eta\}$, the increment $Z_{j, h}$ is uniformly distributed in an interval of length $\eta \wedge \Fl_{j - 1}h^{-1/\de}$. Hence, it suffices to show that 
	\begin{align}
		\label{d3_eq}
		\ls_{h \to \ff}\P\Big(\sum_{j \le h}\E\big[\eta^2  \wedge(\Fl_{j - 1}h^{-1/\de})^2\,| \FF_{j-1,h}\big] > \e\Big)
	\end{align}
	tends to 0 as $\eta \to \ff$. 
By part (2) of Lemma \ref{fre_comp_lem}, the conditional expectation inside the probability becomes
$$\f{2\eta^{2 - \de}\Fl_{j - 2}}{(2 - \de)hx^\de} $$
We may once more cite the Markov LLN for the weak convergence of 
$\f1h\sum_{j \le h}{\Fl_{j-2}}$
to deduce that \eqref{d3_eq} tends to 0 as $\eta \to \ff$.
\qed

\medskip
%
%
It remains to establish the limits in Lemma \ref{fre_comp_lem}.
\bep[Proof of Lemma \ref{fre_comp_lem}]\phantom{a}~\\

{\bf Part (1).} Integration with respect to the Fr\'echet density yields that 
\begin{align*}
	\E[(1 - (a/F)^{1/\de})_+] &= \int_a^\ff (1 - (a/x)^{1/\de})x^{-2}\exp(-x^{-1}) \d x.
\end{align*}
For large $x$, the exponential factor approaches 1 and 
$$\int_a^\ff (1 - (a/x)^{1/\de})x^{-2} \d x= a^{-1} - \f\de{\de + 1}a^{-1} = \f1{\de + 1} a^{-1}.$$

{\bf Part (2).}
	We split the  expectation depending on which of the two contributions in the minimum becomes relevant. 
First, as in part (1),
	\begin{align}
		\label{fre_comp_eq}
		\lim_{a \to \ff} a\eta^2  \P(F \ge a \eta^\de) = \eta^{2 - \de}.
	\end{align}
	Hence, it remains to determine 
	$$\lim_{a \to \ff}a^{1 - 2/\de}\E\big[F^{2/\de} \one\{F \le a \eta^\de\}\big].$$
	By l'H\^opital's rule, we see that
	$$
	\lim_{a \to \ff}(a\eta^\de)^{1 - 2/\de}\int_0^{a \eta^\de} x^{2/\de}x^{-2}\exp(-x^{-1}) \d x = \f\de{2 - \de}.
	$$
	Combining the latter with the result in \eqref{fre_comp_eq} concludes the proof.
\enp

\section{Distances -- Upper bound}
\label{upProofSec}

For the lower bound on the distances in Section \ref{lowBound}, it was sufficient to control the deviation of a single walker. Establishing the upper bound is substantially more involved, as we need to understand the joint movements of the left and the right walker.  To lighten notation, we omit the torus size $N$ in the quantities $\Xln$, $\Xrn$, $\Fln$ and $\Frn$. 

%
%
\ssec{Proof for $\de < 1$}
	\label{heavUpSec}
	We begin by discussing the heavy-tailed setting, as the argument is particularly short. The reason herefore lies in the rapid growth of the fitnesses following from recursion \eqref{hDiffEq}. In particular, the upper bound $N$ for the fitnesses becomes absorbing: after reaching it, it remains there for a long period of time.

	\bel[Absorbing upper fitness bound]
		\label{absorbLem}For $\de<1$, 
		$$\lim_{N \to \i}\P\bi(\vp(\Fl_h) \ge N > \vp( \Fl_{h + 1}) \text{ for some $h \le N$}\bi) = 0.$$
	\enl
	\bep
		Although the recursion leading to the bound \eqref{hDiffEq} refers to the limiting model where the torus is replaced by the integers, we obtain a finite-volume version by the same arguments together with an additional truncation: 
		\begin{equation}\label{eqFlRestricted}
		\Fl_{h + 1} = \bi((\vp(\Fl_h) \wedge N)F_{h + 1}\bi)^{1/\de}.
		\end{equation}
		Thus, 
		$$\P\bi(\vp(\Fl_h) \ge  N  > \vp(\Fl_{h + 1})\bi) \le \P(F_{h + 1} <  N^{\de - 1}) = \exp\big(- N^{1 - \de}\big),$$
		so that invoking the union bound over $h \le N$ concludes the proof.
	\enp

	Equipped with this auxiliary result, we now establish the tightness asserted in Theorem \ref{distThm}.
	\bep[Proof of Theorem \ref{distThm}; upper bound; $\de < 1$]
		First, the walkers coalesce with certainty once the scopes reach $N$. Hence, by Lemma \ref{absorbLem}, it suffices to show that 
		$$\lim_{K \to \i}\ls_{N \to \i}\P\Bi( \sup_{h \le \ll(N) + K}\vp(\Fl_h) < N\Bi) = 0.$$
		Now, as long as $\vp(\Fl_h) \le N$, the fitness in the torus model coincides with the one in the infinite limit. Next, by Lemma \ref{heavTightLem}, the fitnesses in the limit model grow as $\mu^{Z\de^{-h}}$ for a positive random variable $Z$. We conclude by noting that 
		$$ \lim_{K \to \i}\sup_{N \ge 1}\P\Bi(\mu^{Z\de^{-(\ll(N) + K)}} < N\Bi) = \lim_{K \to \i} \P\bi(Z\de^{- K} < 1\bi)  = 0.$$
	\enp

%
	%
\ssec{Proof for $\de = 1$}
	\label{critUpSec}
To prove the upper bound for $\de = 1$, we first show that, with high probability, the walker's fitness is close to $N$ after at most 
$$\t_0 := \t_0(N) := \lm(N) + \lm(N)^{7/8}$$ 
steps. 

%
%

\bel[Lower bound on fitnesses]
	\label{t0Prop}
	For $\de=1$, 
	$$\lim_{N \to \i}\P\Bi(\inf\bi\{\Fl_h:\,{ \t_0(N) \le h \le 2\lm(N)}\bi\} \ge N\exp(-\lm(N)^{3/4})\Bi) = 1.$$
\enl

The second ingredient is a specifically constructed coupling between $\bi\{\xff\bi\}_{h \ge 0}$ and independent walkers $\bi\{\xfii\bi\}_{h \ge 0}$, the latter moving w.r.t.\ two independent copies of the fitnesses $(F_{i,h})_{i,h}$. 
We write 
$$\Esu_h := \bi\{\xff = \xfii\bi\},$$
and
$$\Efa_h := \bi\{\xff \ne \xfii\bi\}$$
for the events that the coupling succeeds, respectively fails at level $h$. Moreover, let 
$$\FFco_h := \s\bi(\Xl_i,  \Xr_i, \Xli_i, \Xri_i, \Fl_i, \Fr_i,\Fli_i, \Fri_i\bi)_{i \le h}$$
denote the $\s$-algebra of information on the coupled walkers up to level $h$.
Note that $\Fl_h = \Fr_h$ if coalescence occurs at level $h$, whereas $\Fli_h \ne \Fri_h$ almost surely,  by absolute continuity of the fitnesses.  Therefore, 
$\{\Xl_h = \Xr_h\} \subset  \Efa_h.$
The crux of the coupling is that, whenever it fails, the walkers coalesce with probability at least $1/4$.

%
%
\bel[Coupling with independent walkers]
	\label{coupProp}
	There is a coupling between the true walkers $\bi\{\xff\bi\}_{h \ge 0}$ and independent walkers $\bi\{\xfii\bi\}_{h \ge 0}$ such that almost surely on the event $\Esu_h$,
	$$\P(\Xl_{h + 1} = \Xr_{h + 1}\,|\, \FFco_h)  \ge \tfrac14 \P(\Efa_{h + 1},|\, \FFco_h)  .$$
\enl

Finally, we show that for the independent walkers starting from fitnesses at least as large as $N \exp(-\lm(N)^{3/4})$,  with high probability $\Fl_h = \Fr_h = N$ for some $h \le \lm(N)^{7/8}$.

\bel[Absence of long excursions]
	\label{noExcProp}
	For $\delta=1$ and every $\e > 0$ there exists $\Ne = \Ne(\e)$ such that if $N \ge \Ne$, then
	$$\P\bi(\Fli_h = \Fri_h = N \text{ for some $h \le \lm(N)^{7/8}$}\,|\, \Fri_0, \Fli_0\bi) \ge 1 - \e$$
	holds almost surely on the event 
	$$\Ek = \Ek(N) = \bi\{\min\bi\{\Fri_0 ,  \Fli_0 \bi\} \ge N \exp(-\lm(N)^{3/4})\bi\}.$$
\enl

Before establishing Lemmas \ref{t0Prop}, \ref{coupProp} and \ref{noExcProp}, we complete the proof of Theorem \ref{distThm}.
\bep[Proof of Theorem \ref{distThm}, upper bound; $\de = 1$]
	 By Lemma \ref{t0Prop}, we may assume that $\min\{\Fl_h, \Fr_h\} \ge N\exp(-\lm(N)^{3/4})$ for all $\t_0 \le h \le 2\lm(N)$. Now, if the coupling fails without coalescence at time $h \ge \t_0$, then we restart it by initializing $(\Xli_h, \Xri_h)$ at time $h + 1$ with the values of the true system $(\Xl_h, \Xr_h)$. Proceeding recursively, this produces stopping times $\{\te_i\}_{i \ge 1}$ encoding the sequence of coupling failures, where we impose that $\te_0 = \t_0$. To ease notation, we let the sequence $\te_i$ be constant after the index $i_0$ where $\te_{i_0} = H_N$, i.e., where coalescence occurs.

	In particular, 
	$$
		\P\bi(H_N \ge \lm(N) + \Ke\lm(N)^{7/8}\bi) 
		\le \P(\te_{\Ke} \ne H_N) +  \sum_{i \le \Ke}   \P\bi(\te_i - \te_{i - 1}\ge  \lm(N)^{7/8}\bi). 
		$$
Now, fixing $\e > 0$ and $\Ke$ with $4^{-\Ke} \le \e$, applying Lemmas \ref{coupProp} and \ref{noExcProp} concludes the proof.
\enp

%
%
Next, we prove Lemma \ref{t0Prop}.
\bep[Proof of Lemma \ref{t0Prop}]
	First, 
	$$\P\bi(F_h \le  1/(2\log\log(N))\bi) = \exp\bi(-2\log\log(N)\bi) = \log(N)^{-2},$$
	so that we may assume $F_h \ge 1/(2\log\log(N))$ for all $h \le 2\lm(N)$.

	By Lemma \ref{critTightLem}, the event  $\bi\{\sup_{h \le \t_0}\vp(\Fl_h) \ge N\bi\}$ occurs with high probability. Hence, it remains to show that 
	$$\lim_{N \to \i}\P\Bi(\inf_{h, h'\le 2\lm(N)} (\Fl_{h + h'} / \Fl_h) \le \exp(-\lm(N)^{3/4}/2)\Bi) = 0.$$
	To that end, we fix $h, h' \le 2\lm(N)$ and apply the union bound afterwards. 
	First, by \eqref{cDiffEq} in companion with \eqref{eqFlRestricted}, 
	$$\P\Bi(\Fl_{h + h'} / \Fl_h \le \exp(-\lm(N)^{3/4}/2)\Bi) \le \P\Bi(\prod_{h < i \le h + h'}(2F_i) \le \exp(-\lm(N)^{3/4}/2)\Bi).$$
	If $h' \le \log(N)^{2/3}$, then
	$$\prod_{h < i \le h + h'}(2F_i) \ge (\log\log(N))^{-h'} > \exp(- \lm(N)^{3/4}/2).$$
	On the other hand, if $h' \ge \log(N)^{2/3}$, then using moderate deviations \cite[Theorem 11.2]{mdp} for $\{\lm(2F_i)\}_{i \ge 1}$ implies that 
	$$\P\Bi(\prod_{h < i \le h + h'}(2F_i) \le 1\Bi) \le \exp\bi(-\log(N)^{1/2}\bi),$$
	as asserted.
\enp

%
%
To construct the coupling, we write 
$$\ccl_h := \{i \in \{0, \dots, N - 1\}:\, d_N(i , \Xl_h) \le \lc \Fl_h\rc\}$$ 
for the scope of $\Xl_h$ and define $\ccr_h$ accordingly. We also put
$$\cca_h := \ccl_h \cap \ccr_h, \qquad \text{ and } \qquad \ccu_h := \ccl_h \cup \ccr_h,$$
for the intersection and union of the scopes of the two walkers at time $h$.

\bep[Proof of Lemma \ref{coupProp}]

	%
	%
 Write $\nl_h, \nr_h, \nca_h, \ncu_h \ge 0$ for the cardinalities of the scopes $\ccl_h, \ccr_h, \cca_h, \ccu_h$, respectively. Then, the coalescence event $\Xl_{h + 1} = \Xr_{h + 1}$ means that the maximum of all fitnesses in $\ccu_h$ is contained in $\cca_h$. This event is of probability $\nca_h/\ncu_h$. To prove the assertion, we therefore need to construct a coupling such that $\P(\Efa_{h + 1}\,|\,\FFco_h) \le {4\nca_h}/{\ncu_h}$ holds under $\Esu_h$.

	%
	%
	 To that end, we proceed inductively and assume the coupling to be constructed until step $h$ and work under the event of coupling success $\Esu_h$. In order to define the true process $\bi((\Xl_{h + 1}, \Fl_{h + 1}\bi), \bi(\Xr_{h + 1}, \Fr_{h + 1})\bi)$, we let $F_1, \ld, F_{\ncu_h}$ be a sequence of i.i.d.\ fitnesses. Here, we think of $F_1, \ld, F_{\nl_h}$ to be in the scope of the left walker and $F_{\ncu_h - \nr_h + 1}, \ld, F_{\ncu_h}$ to be in the scope of the right walker. See Figure \ref{figCoupling}. To recover the true process, each of the walkers selects the maximum fitness within its scope. Note that $\cca_h$ is empty if $d_N(\Xl_h, \Xr_h) > \lc\Fl_h\rc + \lc\Fr_h\rc$.

\begin{figure}[ht]
	\begin{center}
	\begin{tikzpicture}

	\draw[white] (0, 0.5) -- (10, 0.5);
	\draw (0, 0) -- (10, 0);

	\fill (0, 0) circle (1.5pt);
	\fill (1, 0) circle (1.5pt);
	\fill (2, 0) circle (1.5pt);
	\fill (3, 0) circle (1.5pt);
	\fill (4, 0) circle (1.5pt);
	\fill (5, 0) circle (1.5pt);
	\fill (6, 0) circle (1.5pt);
	\fill (7, 0) circle (1.5pt);
	\fill (8, 0) circle (1.5pt);
	\fill (9, 0) circle (1.5pt);
	\fill (10, 0) circle (1.5pt);

	\draw[blue,very thick] (0, -.95) -- (7, -.95);
	\draw[red,very thick] (5, -1.35) -- (10, -1.35);

	\draw [decorate,decoration={brace}, ultra thick] (0, .8) -- (10, .8);
	\draw [decorate,decoration={brace}, ultra thick] (5, .1) -- (7, .1);

	       \coordinate[label=-90: {1}] (D) at (0,.0);
	       \coordinate[label=-90: {$\ncu_h$}] (D) at (10,-0.0);
	       \coordinate[label=-90: {$\ncu_h - \nr_h + 1$}] (D) at (5,-.0);
	       \coordinate[label=-90: {$\nl_h$}] (D) at (7,-.0);
	       \coordinate[label=-90: {\textcolor{red}{ $\ccr_h  $}}] (D) at (8,-0.7);
	       \coordinate[label=-90: {\textcolor{blue}{ $\ccl_h  $}}] (D) at (3,-.3);
	       \coordinate[label=-90: {{ $\cca_h  $}}] (D) at (6,.7);
	       \coordinate[label=-90: {{ $\ccu_h  $}}] (D) at (5,1.5);
\end{tikzpicture}
	\end{center}
\caption{An illustration of the coupling in the proof of Lemma \ref{t0Prop}.}
\label{figCoupling}\end{figure}
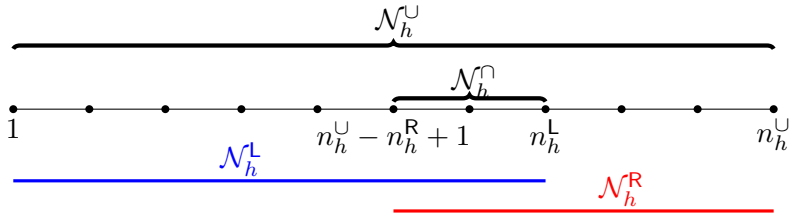

To construct the coupling, we may w.l.o.g.~assume that $\nl_h \le \nr_h$. Otherwise, reverse the roles of $\ms L$ and $\ms R$ in the following argument.
In order to define the independent process $\bi((\Xli_{h + 1}, \Fli_{h + 1}), (\Xri_{h + 1}, \Fri_{h + 1})\bi)$, we need to remove the dependence coming from the observation that in the true process, both the left and the right walker query the fitnesses $F_{\ncu_h - \nr_h + 1}, \ld, F_{\nl_h}$. To this end, we introduce a further copy of $\nca_h$ independent uniform fitnesses $F_{\ncu_h + 1}, \ld, F_{\ncu_h + \nca_h}$. Then, the left walker selects the maximum fitness among $F_1, \ld, F_{\nl_h}$ and the right walker selects the maximum fitness among $F_{\nl_h + 1}, \ld, F_{\nl_h + \nr_h}$. Hence, under $\Esu_h$, we may express the probability of a coupling failure succinctly as
\begin{align*}
	\P(\Efa_{h + 1}\,|\,\FFco_h) = \P\bi(\{\mr \in \cca_h\} \cup \{\mrs \ge \ncu_h + 1\}\bi),
	\end{align*}
	where $\mr \in \cca_h = \{\ncu_h - \nr_h + 1, \ld, {\ncu_h}\}$ and $\mrs \in \{\nl_h + 1, \ld, \nl_h + \nr _h\}$ denote the locations of the maxima of $F_{\ncu_h - \nr_h + 1}, \ld, F_{\ncu_h}$ and $F_{\nl_h + 1}, \ld, F_{\nl_h + \nr_h}$, respectively. Since both $\mr$ and $\mrs$ are uniformly distributed in their domains of size $\nr_h$, we deduce that 
	$$\P\bi(\{\mr \in \cca_h\} \cup \{\mrs \ge \ncu_h + 1\}\bi) \le \P(\mr \in \cca_h) + \P(\mrs \ge \ncu_h + 1) = \frac{2\nca_h}{\nr_h} \le \frac{4\nca_h}{\ncu_h},$$
	the last step relying on the assumption that $\nl_h \le \nr_h$.
\enp

%
%
Finally, we establish Lemma \ref{noExcProp}.
\bep[Proof of Lemma \ref{noExcProp}]
	Recalling the recursion formula \eqref{eqFlRestricted} for $\de=1$, we introduce a modified fitness by setting $\Flm_0 = (2\Fli_0 \wedge N) / N$ and then $\Flm_{h + 1} = ((2\Flm_hF_{h + 1})\wedge 1)  $. Then, by induction, $\Flm_h \le \vp(\Fli_h) / N$ for all $h \ge 0$. Defining $\Frm_h$ similarly, it therefore suffices to prove the assertion with $\vp(\Fli_h)$ and $\vp(\Fri_h)$ replaced by $N\Flm_h$ and $N\Frm_h$. 

	Now, $\Glm_h := \lm(\Flm_h)$ is a random walk in $(-\i, 0]$ truncated at 0 with drift to the right. Hence, there exists $K > 0$ such that for $h_0 =\lm(N)^{7/8} / 2$ we have
	$\P(\Glm_{h_0} \le -K) \le \e.$
	In particular, $\P\bi((\Glm_{h_0}, \Grm_{h_0}) \in [-K, 0]^2\bi) \ge 1 - 2\e$. Finally, note that the set $[-K, 0]^2$ has finite expected return time and that there is a positive probability $p(K) > 0$ such that almost surely on the event $(\Glm_h, \Grm_h) \in [-K, 0]^2$ we have
	$$\P\bi(\Glm_{h + 1} = \Grm_{h + 1} = 0 \,|\, \Glm_h, \Grm_h\bi) \ge p(K).$$ 
	This gives domination by a geometric random variable and thereby concludes the proof.
\enp

%
%
\ssec{Proof for $\de \in (1, 2) \cup (2, \ff)$}
\label{lightUpSec}
For $\de > 2$ we think of $\{\Xl_h - \Xr_h\}_{h \ge 0}$ as a centered random walk whose step size admits a finite second moment, so that we obtain convergence to Brownian motion just as in Lemma \ref{invPrinLem}. For $\de \in (1, 2)$ we proceed similarly, but now with the stable limit law (Lemma \ref{stabPrinLem}) rather then the invariance principle of Lemma \ref{invPrinLem}.  Since $\{\Xl_h\}_{h \ge 0}$ and $\{\Xr_h\}_{h \ge 0}$ are independent, we conclude from Lemma \ref{stabPrinLem} that also $\big\{N^{-1/\de}(\Xl_{Nt} - \Xr_{Nt}) \big\}_{t \le 1}$ converges in distribution to a $\de$-stable process. 
\bep[Proof of Theorem \ref{distThm}; upper bound; $\de \in (1, 2) \cup (2, \ff)$]
	We write
	$$T_N := \{h \ge 0:\, \Xr_h \le \Xl_h\}$$
	for the first time, where the left walker moves past the right one. Note that when the left walker moves past the right one, then necessarily the scopes must intersect and there is a positive probability of coalescence.	Thus, it suffices to derive bounds on $T_N$. To that end, note that the distance $|\Xl_0 - \Xr_0|$ of the initial locations is at most $N$.

	Now, we write $T$ for the first time that Brownian motion, respectively a symmetric $\de$-stable process exceeds 1, so that
	$$\limsup_{N \to \ff}\P(T_N \ge KN^{2 \wedge \de}) \le \P(T \ge K).$$
	Finally, we leverage that 	hitting times are almost surely finite, which is true not only  for Brownian motion but also for the symmetric $\de$-stable process because it is recurrent when $\de \in (1,2)$, see \cite[Theorem I.17]{bertoin}. Therefore, the right-hand side tends to 0 as $K \to \infty$.
\enp

\bigskip
\subsection*{Acknowledgement.} The authors are indebted to Lisa Hartung for pointing us to the relevant stable limit laws for martingales. 

\bibliography{./lit}
\bibliographystyle{abbrv}

\end{document}